\documentclass[final]{siamart}

\usepackage{amsfonts,amssymb}
\usepackage{mathrsfs,fancybox,pifont}
\usepackage{graphicx, float}
\graphicspath{{./}}
\IfFileExists{placeins.sty}{\usepackage[section]{placeins}}{}
\usepackage{booktabs,multirow}
\usepackage{colortbl}
\usepackage{enumitem}
\usepackage{algpseudocode}
\usepackage{url}
\usepackage{color}
\usepackage{cases}
\usepackage{mathtools}
\usepackage{version}
\usepackage{subcaption}
\usepackage{bm}

\newsiamthm{assumption}{Assumption}
\newsiamremark{remark}{Remark}
\numberwithin{theorem}{section}
\theoremstyle{plain}
\theoremheaderfont{\normalfont\itshape}
\theorembodyfont{\normalfont}
\theoremseparator{.}
\theoremsymbol{}
\newtheorem{example}{Example}

\allowdisplaybreaks


\newcommand{\TheTitle}{Asymptotically Compatible Variational Energy Dissipation Laws and Structure-Preserving Variable-Step Linear Relaxation Schemes for a Subdiffusive Tumor--Nutrient Model}
\newcommand{\TheAuthors}{Hui Yu, Zhaoyang Wang, and Ping Lin}
\headers{Subdiffusive Tumor--Nutrient Relaxation Schemes}{H. Yu, Z. Wang, and P. Lin}
\title{\TheTitle}
\author{Hui Yu\thanks{School of Mathematics and Physics, University of Science and Technology Beijing, Beijing, 100083, China (\email{h.yu@ustb.edu.cn}).}
\and Zhaoyang Wang\thanks{Corresponding author. Department of Mathematics, Beijing Normal University, Zhuhai, Guangdong 519087, China (\email{zhaoyang584520@163.com}). The work of this author was supported by the National Natural Science Foundation of China (Grant No.~12501535) and the Scientific Research Fund of Beijing Normal University (Grant No.~12900-312200502405).}
\and Ping Lin\thanks{Corresponding author. Division of Mathematics, University of Dundee, Dundee, DD1 4HN, UK (\email{P.Lin@dundee.ac.uk}). The work of this author was supported by the National Natural Science Foundation of China (Grant No.~12371388) and the Beijing Natural Science Foundation (Grant No.~IS25005).}}

\ifpdf
\hypersetup{pdftitle={\TheTitle},pdfauthor={\TheAuthors}}
\fi

\begin{document}

\maketitle

\begin{abstract}
In this work, we develop a mass-conservative subdiffusive tumor--nutrient model coupled through chemotaxis and memory-dependent reaction exchange, 
together with structure-preserving variable-step linear relaxation schemes.  
A weighted Riemann--Liouville identity is proved and used to construct a variational energy and the associated energy dissipation law for the proposed model.
Both the model and the associated variational energy dissipation law are asymptotically compatible with their integer-order counterparts 
as the fractional order approaches one. 
For numerical approximation, we discretize the Riemann--Liouville fractional derivatives using the $L1_R$ formula on nonuniform time meshes and 
derive the associated discrete weighted inequality.
We first construct a variable-step linear relaxation scheme for the subdiffusive model.
Then, to mitigate alternating perturbations induced by the auxiliary-variable error and amplified by the $L1_R$ convolution at small fractional orders, 
we construct a temporally damped linear relaxation scheme. 
Since temporal damping leaves the auxiliary-variable recurrence unchanged, 
we further introduce an a posteriori relaxation correction to reduce alternating errors in the auxiliary variable.
All three schemes preserve total mass, are proven to satisfy discrete variational energy dissipation laws using the discrete weighted inequality, 
and remain consistent with the integer-order limit. 
Several numerical examples are presented to validate the accuracy and stability of the proposed schemes.  
\end{abstract}

\begin{keywords}
subdiffusive tumor growth, variational energy dissipation, structure-preserving relaxation schemes, variable time steps
\end{keywords}

\begin{AMS}
65M12, 35R11, 35Q92, 65M06, 92C17
\end{AMS}

\section{Introduction}

Tumor growth is driven by the coupled effects of cell proliferation, nutrient transport, chemotactic migration, 
and spatial heterogeneity in the microenvironment.  
As the tumor grows, the combined effects of nutrient diffusion and cellular consumption can generate spatial gradients in nutrient concentration, 
contributing to spatially heterogeneous tumor growth~\cite{demicco2024metabolic}.
Such nonuniform growth can lead to complex tumor morphologies, including protrusive and finger-like structures~\cite{frieboes2006integrated}.
Mathematical models provide a framework for investigating how interactions between tumor cells and their microenvironment influence tumor growth~\cite{oden2010general}.
 
Phase-field models describe tumor cells, nutrients, angiogenic factors, extracellular matrix, and host cells as interacting constituents represented by volume fractions or concentrations~\cite{xu2016mathematical,lima2014hybrid}.
For a closed tumor--nutrient system, the fluxes and reaction exchange must be compatible with total mass conservation and free-energy dissipation~\cite{wang2025stability}.
In such a closed system, classical diffusion may fail to describe anomalous transport in heterogeneous tumor microenvironments~\cite{jiang2014anomalous}.
Inspired by the work of Fritz et al.~\cite{fritz2021subdiffusive}, we introduce Riemann--Liouville fractional derivatives into both mass fluxes and reaction terms to formulate a closed subdiffusive tumor--nutrient model.
Our aim is to construct a variational energy for this system and ensure that both the model and its energy dissipation law are asymptotically compatible with their integer-order counterparts.
However, the standard Riemann--Liouville energy inequality does not directly accommodate the state-dependent reaction coefficient within the memory operator.
To address this difficulty, we derive a weighted Riemann--Liouville identity to establish a variational energy dissipation law for the subdiffusive mass-conservative coupled system.
 
The nonlocality of time-fractional derivatives makes it difficult to establish dissipation of the original energy in phase-field equations, and the corresponding energy properties are often characterized by energy boundedness~\cite{quan2023decreasing} or modified energy dissipation laws~\cite{tang2019energy,du2020time,quan2020define,qi2024unified}.
Numerical methods, such as the scalar auxiliary variable (SAV)~\cite{shen2018scalar,shen2019new} and its extensions~\cite{liu2020exponential,huang2020highly}, allow linear treatment of nonlinear potentials while maintaining energy stability.
Relaxation SAV methods further reduce errors associated with the ODE-based update of the auxiliary variable~\cite{jiang2022improving}.
Recent linear relaxation strategies compute the auxiliary variable from a linear algebraic equation,
thereby avoiding error accumulation from numerically solving the auxiliary-variable ODE~\cite{zhang2024linear,yu2026linear}.
However, the auxiliary-variable update in the linear relaxation scheme admits an undamped alternating error mode that can be amplified at small fractional orders in the history convolution arising from the $L1_R$ discretization of the Riemann--Liouville fractional derivatives.
To limit the propagation of this alternating error into the tumor phase variable, we introduce a temporal damping term (see Section~\ref{subsec:energy-compatible-damped-relaxation}).
Since this damping leaves the auxiliary-variable recurrence unchanged, we augment the damped scheme with an a posteriori relaxation correction that directly controls the amplitude of the alternating auxiliary-variable error passed to subsequent time steps (see Section~\ref{subsec:energy-compatible-damped-modified-relaxation}).
All three schemes conserve total mass and satisfy discrete variational energy dissipation laws, with both the schemes and their dissipation laws being asymptotically compatible with their integer-order counterparts.

This paper is organized as follows.
Section \ref{sec:modeling} introduces the subdiffusive tumor growth model coupled with nutrient dynamics and establishes the associated variational energy dissipation law.
Section \ref{sec:numericalscheme} develops the linear relaxation, damped linear relaxation, and damped modified linear relaxation schemes on nonuniform time meshes, 
establishes their discrete mass conservation and variational energy dissipation laws, and shows that both the schemes and their dissipation laws are asymptotically compatible with their integer-order counterparts.
Section \ref{sec:example} presents numerical experiments to verify the accuracy and structure-preserving properties of the schemes.

\section{Mathematical Modeling} \label{sec:modeling}
In this section, we derive a mass-conservative subdiffusive tumor growth model coupled with nutrient dynamics.
We then establish a weighted Riemann--Liouville identity and use it to construct a variational energy and establish the corresponding dissipation law, 
which is asymptotically compatible with its integer-order counterpart.

\subsection{Model Formulation}
Let $\Omega\subset\mathbb{R}^d$ ($d\in\{2,3\}$) be the spatial domain.
The order parameter $\phi(\bm{x},t)\in[0,1]$ denotes the tumor volume fraction ($\phi=1$ for tumor tissue and $\phi=0$ for healthy tissue), while $\sigma(\bm{x},t)$ denotes the nutrient concentration. 
Under the assumption of constant tumor-cell density, the local mass balance reads
\begin{equation}\label{masstotal}
    \frac{\partial \phi}{\partial t}= -\nabla \cdot \boldsymbol{J}_\phi+S_\phi, \quad \frac{\partial \sigma}{\partial t}=-\nabla \cdot \boldsymbol{J}_\sigma+S_\sigma.
\end{equation}
$\boldsymbol{J}_\phi$ and $\boldsymbol{J}_\sigma$ denote the tumor and nutrient mass fluxes, respectively, and $S_\phi,\ S_\sigma$ are the corresponding reaction terms.
$S_\phi = -S_\sigma$ is assumed to ensure total mass conservation in a closed system.

We assume that tumor dynamics are driven by a Ginzburg--Landau free energy incorporating chemotactic coupling between the tumor and nutrient.
The free energy is defined as~\cite{hawkins2012numerical}
\begin{equation}\label{eq:energy-free}
    E(\phi, \sigma)= \int_\Omega \left(
    \frac{\epsilon_\phi^2}{2}|\nabla \phi|^2
    + F(\phi) + \frac{1}{2\delta}\sigma^2
    + \chi(\phi, \sigma) \right) d\bm{x}.
\end{equation}
The first term in \eqref{eq:energy-free} represents the interfacial energy, with $\epsilon_\phi$ controlling the interface thickness.
The bulk double-well potential $F(\phi)=\frac{1}{4}\phi^2(1-\phi)^2$ drives phase separation.
The term $\frac{1}{2\delta}\sigma^2$ accounts for the energetic contribution of the nutrient-rich extracellular water, with the small parameter $\delta>0$ controlling the strength of this quadratic nutrient penalty, while $\chi(\phi,\sigma)$ represents the chemotactic energy.
Thus, the associated chemical potentials are
\begin{equation}\notag
    \mu_\phi = \frac{\delta E}{\delta \phi}
    = -\epsilon_\phi^2 \Delta \phi + f(\phi) + \partial_\phi \chi,
    \quad
    \mu_\sigma = \frac{\delta E}{\delta \sigma}
    = \frac{1}{\delta}\sigma + \partial_\sigma \chi,
\end{equation}
where $f(\phi)=F'(\phi)$.

To enforce the energy dissipation law $\frac{d}{dt}E(\phi, \sigma) \leq 0$, combining the local mass balances \eqref{masstotal} with the free energy \eqref{eq:energy-free} leads to the following constitutive choices (see~\cite{wang2025stability} for details):
\begin{equation}\notag
    \bm{J}_\phi = -M_\phi \nabla\mu_\phi, \quad \bm{J}_\sigma = -M_\sigma \nabla \mu_\sigma,\quad S_\phi = - S_\sigma = P(\phi)\left(\mu_\sigma - \mu_\phi \right),
\end{equation}
where $M_\phi$ and $M_\sigma$ are positive constant mobility coefficients for tumor and nutrient, respectively, and $P(\phi)$ is a nonnegative proliferation function.

We now extend these constitutive relations to incorporate memory effects associated with subdiffusion, an anomalous transport process characterized by a mean squared displacement $\langle x^2(t)\rangle\propto t^\alpha$ with $0<\alpha<1$.
A standard microscopic description is provided by continuous-time random walks (CTRWs), in which stochastic jumps are separated by random waiting times~\cite{montroll1965random,metzler2000random}.
For jump distributions with finite second moments, a waiting-time density with a power-law tail proportional to $t^{-1-\alpha}$ leads to a time-fractional diffusion equation in the continuum limit~\cite{compte1996stochastic,metzler2000random}.
CTRW-based models have been used to describe tumor-cell invasion by coupling anomalous migration with proliferative dynamics~\cite{fedotov2007migration}, and experiments have reported subdiffusive cancer-cell migration under specific microenvironmental conditions~\cite{sharma2026glassy}.
Reaction--subdiffusion theory further shows that the form of the memory terms depends on how reaction kinetics are coupled to the waiting-time process, with fractional memory potentially entering both transport and reaction terms~\cite{henry2006anomalous,nepomnyashchy2016mathematical}.
Motivated by~\cite{fritz2021subdiffusive}, we introduce Riemann--Liouville memory into the tumor and nutrient fluxes and their reaction exchange.
We use the same fractional order $\alpha$ for both fluxes and the reaction exchange, retaining $S_\phi=-S_\sigma$:
\begin{equation}\notag 
        \bm{J}_\phi = - M_\phi {^R\partial_t^{1-\alpha}} \nabla \mu_\phi,\quad  \bm{J}_\sigma = - M_\sigma {^R\partial_t^{1-\alpha}} \nabla \mu_\sigma,\quad
        S_\phi = {^R\partial_t^{1-\alpha}} \left( P(\phi) \left(\mu_\sigma - \mu_\phi \right) \right) , 
\end{equation}
where ${}^R\partial_t^{1-\alpha}$ denotes the Riemann--Liouville time-fractional derivative defined as
\begin{equation}\notag
    {}^R\partial_t^{1-\alpha}u := \partial_t I_t^\alpha u,
    \quad
    \bigl(I_t^\alpha u\bigr)(t) := \int_0^t\omega_\alpha(t-s)u(s)\,\mathrm ds,
    \quad 0<\alpha<1,
\end{equation}
with $\omega_\alpha(t):=t^{\alpha-1}/\Gamma(\alpha)$.

The resulting mass-conservative subdiffusive tumor growth model coupled to nutrient dynamics takes the form
\begin{subequations} \label{eq:subdiffusion-tumor}
\begin{align}
\partial_t \phi& = M_\phi {^R\partial_t^{1-\alpha}} \left( \Delta \mu_\phi \right)+   {^R\partial_t^{1-\alpha}} \left( P(\phi) \left(\mu_\sigma - \mu_\phi \right) \right), \label{eq:tumor-phi}\\
\mu_\phi & =-\epsilon_\phi^2 \Delta \phi+f(\phi) +\partial_\phi \chi  , \notag\\
\partial_t  \sigma& = M_\sigma{^R\partial_t^{1-\alpha}}\left( \Delta  \mu_\sigma \right)  -   {^R\partial_t^{1-\alpha}} \left( P(\phi) \left(\mu_\sigma - \mu_\phi \right) \right), \label{eq:tumor-sigma}\\
\mu_\sigma & =\frac{1}{\delta}\sigma + \partial_\sigma \chi .\notag
\end{align}
\end{subequations} 
Applying $I_t^{1-\alpha}$ to \eqref{eq:tumor-phi} and \eqref{eq:tumor-sigma} yields the equivalent Caputo formulation:
\begin{equation}\label{CapTumorSystem}
    {^C\partial_t^\alpha} \phi = M_\phi \Delta \mu_\phi + P(\phi) \left(\mu_\sigma - \mu_\phi \right), \quad
    {^C\partial_t^\alpha} \sigma = M_\sigma \Delta \mu_\sigma - P(\phi) \left(\mu_\sigma - \mu_\phi \right).
\end{equation}

\subsection{Asymptotically Compatible Variational Energy Dissipation Law}\label{subsec:original-energy-dissipation} 
We first verify mass conservation for \eqref{eq:subdiffusion-tumor}, then construct a variational energy and establish the corresponding dissipation law.

We impose either periodic boundary conditions or homogeneous Neumann conditions $\partial_{\bm n}\mu_\phi=\partial_{\bm n}\mu_\sigma=\partial_{\bm n}\phi=0$ on $\partial\Omega$. 
Integrating the sum of \eqref{eq:tumor-phi} and \eqref{eq:tumor-sigma} over $\Omega$ and using $S_\phi=-S_\sigma$ yields conservation of the total mass.

To construct the variational energy, we first derive an evolution identity for the free energy $E$.
Taking the $L^2$ inner products of \eqref{eq:tumor-phi} and \eqref{eq:tumor-sigma} with $\mu_\phi$ and $\mu_\sigma$, respectively, and integrating by parts yields
\begin{equation} \label{energy_dissipation}
    \begin{aligned}
        \frac{dE}{dt}
        =& - M_\phi \left( {^R\partial_t^{1-\alpha}}\nabla \mu_\phi,\nabla \mu_\phi \right)
        - M_\sigma \left( {^R\partial_t^{1-\alpha}} \nabla \mu_\sigma,\nabla \mu_\sigma \right) \\
        & - \left( {^R\partial_t^{1-\alpha}} \left( P(\phi) \left(\mu_\sigma - \mu_\phi \right) \right),\mu_\sigma - \mu_\phi \right).
    \end{aligned}
\end{equation}

\begin{lemma}\label{lem:weighted-RL-identity}
    Let $0<\alpha<1$, $\varrho,v\in C^1([0,T])$, and
    $\varrho(t)\ge0$ on $[0,T]$. Then, for $t\in(0,T]$, the following identity holds:
    \begin{align}
        v(t)\bigl({}^R\partial_t^{1-\alpha}(\varrho v)\bigr)(t)
        &={}
        \frac12\bigl({}^R\partial_t^{1-\alpha}(\varrho v^2)\bigr)(t)
        +\frac{d}{dt}\mathcal J_{\varrho,v}(t)
        +\mathcal R_{\varrho,v}(t),
        \label{eq:weighted-RL-identity-B}
    \end{align}
    with
    \begin{align*}
        \mathcal J_{\varrho,v}(t)&:=\frac12\int_0^t v^2(s)\bigl({}^R\partial_s^{1-\alpha}\varrho\bigr)(s)\,ds,\\
        \mathcal R_{\varrho,v}(t)
        &:={}
        \frac{1-\alpha}{2\Gamma(\alpha)}
        \int_0^t
        \frac{\varrho(s)(v(t)-v(s))^2}{(t-s)^{2-\alpha}}\,ds.
    \end{align*}
\end{lemma}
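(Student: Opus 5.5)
The plan is to write everything in terms of the Caputo-type representation of the Riemann--Liouville derivative and then compare both sides pointwise in $t$. For $w\in C^1([0,T])$, the Riemann--Liouville derivative admits the integration-by-parts form
\begin{equation}\notag
\bigl({}^R\partial_t^{1-\alpha}w\bigr)(t)=\omega_\alpha(t)w(0)+\int_0^t\omega_\alpha(t-s)w'(s)\,ds .
\end{equation}
The singular kernel, however, makes it more convenient to use the equivalent Marchaud-type representation. I will derive it by differentiating $\int_0^t\omega_\alpha(t-s)(w(s)-w(t))\,ds+w(t)\,\omega_{\alpha+1}(t)$:
\begin{equation}\notag
\bigl({}^R\partial_t^{1-\alpha}w\bigr)(t)=\frac{w(t)}{\Gamma(\alpha)t^{1-\alpha}}+\frac{1-\alpha}{\Gamma(\alpha)}\int_0^t\frac{w(t)-w(s)}{(t-s)^{2-\alpha}}\,ds .
\end{equation}
The integral converges since $w\in C^1$. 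This uses $\omega_\alpha'(r)=-(1-\alpha)r^{\alpha-2}/\Gamma(\alpha)$ and the fact that $w(t)-w(s)=O(t-s)$ kills the boundary term at $s=t$. I expect justifying this representation to be the main technical step. One regularizes with $\int_0^{t-\varepsilon}$, differentiates, and lets $\varepsilon\to0$, so that the boundary term $\omega_\alpha(\varepsilon)(w(t-\varepsilon)-w(t))=O(\varepsilon^\alpha)$ vanishes.

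Next I apply this formula to the three functions $w=\varrho v$, $w=\varrho v^2$ and $w=\varrho$. The local terms $\frac{\cdot}{\Gamma(\alpha)t^{1-\alpha}}$ combine as
\begin{equation}\notag
v(t)\varrho(t)v(t)-\tfrac12\varrho(t)v^2(t)-\tfrac12 v^2(t)\varrho(t)=0 .
\end{equation}
Here the last term comes from $\frac{d}{dt}\mathcal J_{\varrho,v}(t)=\frac12 v^2(t)\bigl({}^R\partial_t^{1-\alpha}\varrho\bigr)(t)$, which holds by the fundamental theorem of calculus. That requires ${}^R\partial^{1-\alpha}\varrho$ to be continuous on $(0,t]$ and integrable near $0$, which is true since it behaves like $t^{\alpha-1}$.

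For the nonlocal terms, writing $a=v(t)$ and $b=v(s)$, I need to check the algebraic identity
\begin{equation}\notag
a\bigl(\varrho(t)a-\varrho(s)b\bigr)-\tfrac12\bigl(\varrho(t)a^2-\varrho(s)b^2\bigr)-\tfrac12a^2\bigl(\varrho(t)-\varrho(s)\bigr)=\tfrac12\varrho(s)(a-b)^2 .
\end{equation}
Expanding, the $\varrho(t)$ terms give $a^2-\tfrac12a^2-\tfrac12a^2=0$, and the $\varrho(s)$ terms give $-ab+\tfrac12b^2+\tfrac12a^2=\tfrac12(a-b)^2$. Integrating against $\frac{1-\alpha}{\Gamma(\alpha)}(t-s)^{\alpha-2}$ therefore produces exactly $\mathcal R_{\varrho,v}(t)$, which proves \eqref{eq:weighted-RL-identity-B}.

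The hypothesis $\varrho\ge0$ is not used in the identity itself. It only guarantees $\mathcal R_{\varrho,v}\ge0$, which the subsequent energy law will need.
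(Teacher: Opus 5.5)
Your proof is correct and takes essentially the same route as the paper. Both use the Marchaud-type representation of ${}^R\partial_t^{1-\alpha}$ for $\varrho v$, $\varrho v^2$ and $\varrho$, the pointwise algebraic identity that produces $\tfrac12\varrho(s)(v(t)-v(s))^2$, and $\frac{d}{dt}\mathcal J_{\varrho,v}=\tfrac12 v^2\,{}^R\partial_t^{1-\alpha}\varrho$; you additionally justify the representation and the differentiability of $\mathcal J_{\varrho,v}$, which the paper's starting identity takes for granted.
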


\begin{proof}
    We start from the identity
    \begin{equation}\label{eq:weighted-RL-pre}
        \begin{aligned}
            v(t)\bigl({}^R\partial_t^{1-\alpha}(\varrho\,v)\bigr)(t)
            &=
            \frac12\bigl({}^R\partial_t^{1-\alpha}(\varrho\,v^2)\bigr)(t)
            +\frac12\,\omega_\alpha(t)\varrho(t)v^2(t) \\
            &\quad
            +\frac{1-\alpha}{2\Gamma(\alpha)}
            \int_0^t
            \frac{
            \varrho(s)\bigl(v(t)-v(s)\bigr)^2+\bigl(\varrho(t)-\varrho(s)\bigr)v^2(t)
            }{(t-s)^{2-\alpha}}\,ds.
        \end{aligned}
    \end{equation} 
    Using
    \[
    {}^R\partial_t^{1-\alpha}\varrho(t)
    =  \omega_\alpha(t)\varrho(t)
    +\frac{1-\alpha}{\Gamma(\alpha)}
    \int_0^t
    \frac{\varrho(t)-\varrho(s)}{(t-s)^{2-\alpha}}\,ds,
    \]
    we obtain
    \begin{equation*}
    \frac12\,\omega_\alpha(t)\varrho(t)v^2(t)
    +\frac{1-\alpha}{2\Gamma(\alpha)}v^2(t)
    \int_0^t
    \frac{\varrho(t)-\varrho(s)}{(t-s)^{2-\alpha}}\,ds
    =
    \frac12\,v^2(t)\frac{d}{dt}(I_t^\alpha \varrho)(t).
    \end{equation*}
    Hence \eqref{eq:weighted-RL-pre} becomes
    \begin{equation}\label{eq:weighted-RL-pre2}
    \begin{aligned}
    v(t)\bigl({}^R\partial_t^{1-\alpha}(\varrho\,v)\bigr)(t)
    &=
    \frac12\bigl({}^R\partial_t^{1-\alpha}(\varrho\,v^2)\bigr)(t)
    +\frac12\,v^2(t)\frac{d}{dt}(I_t^\alpha \varrho)(t) \\
    &\quad
    +\frac{1-\alpha}{2\Gamma(\alpha)}
    \int_0^t
    \frac{\varrho(s)\bigl(v(t)-v(s)\bigr)^2}{(t-s)^{2-\alpha}}\,ds.
    \end{aligned}
    \end{equation}
    By the definition of $\mathcal J_{\varrho,v}$, for $t>0$,
    \begin{equation}\label{eq:J-derivative}
    \frac{d}{dt}\mathcal J_{\varrho,v}(t)=\frac12v^2(t)\bigl({}^R\partial_t^{1-\alpha}\varrho\bigr)(t)=\frac12v^2(t)\frac{d}{dt}(I_t^\alpha\varrho)(t).
    \end{equation}
    Substituting \eqref{eq:J-derivative} into \eqref{eq:weighted-RL-pre2} gives \eqref{eq:weighted-RL-identity-B}.
\end{proof}

Motivated by the weighted identity in Lemma~\ref{lem:weighted-RL-identity}, we define the continuous variational energy
\begin{equation}\notag
    {\mathcal{E}}_{\alpha}[\phi,\sigma](t)
    := E[\phi,\sigma](t)+\frac{1}{2}\left(I_t^\alpha \mathcal D \right)(t) + \int_\Omega \mathcal J_{P(\phi),\,\mu_\sigma-\mu_\phi}(\bm{x},t)\, d\bm{x},
\end{equation} 
where the total dissipation density $\mathcal D(t) = \mathcal D_{\mathrm{diff}}(t) + \mathcal D_P(t)$ is given by
\begin{equation}\label{eq:cont-dissipation-density}
    \mathcal D_{\mathrm{diff}}(t):=M_\phi\|\nabla\mu_\phi(t)\|^2+M_\sigma\|\nabla\mu_\sigma(t)\|^2,\quad
    \mathcal D_P(t):=\left\|\sqrt{P(\phi)}\left(\mu_\sigma-\mu_\phi\right)\right\|^2 .
\end{equation}

The Riemann--Liouville inequality~\cite{alsaedi2015maximum} states that
\begin{equation}\label{eq:RL-inequality}
    v(t)\left({^R\partial_t^{1-\alpha}}v\right)(t) \geq \frac{1}{2}\left({^R\partial_t^{1-\alpha}}v^2\right)(t) + \frac{1}{2}\omega_\alpha(t)v^2(t),\quad \forall v \in C^1([0,T]).
\end{equation}
Applying this inequality to the first two terms on the right-hand side of \eqref{energy_dissipation} and Lemma~\ref{lem:weighted-RL-identity} to the last term of \eqref{energy_dissipation} yields the variational energy dissipation law
\begin{equation}\notag
    \frac{d {\mathcal{E}}_\alpha}{dt}
    +\frac12\omega_\alpha(t)\mathcal D_{\mathrm{diff}}(t)
    +\int_\Omega\mathcal R_{P(\phi),\,\mu_\sigma-\mu_\phi}(\bm{x},t)\,d\bm{x}
    \le0.
\end{equation} 
Dropping the nonnegative reaction remainder yields
$\frac{d\mathcal E_\alpha}{dt}
+\frac12\omega_\alpha(t)\mathcal D_{\mathrm{diff}}(t)\le0$.

\begin{remark}[Integer-order limit]
    The model \eqref{eq:subdiffusion-tumor} and its variational energy dissipation law are asymptotically compatible with their integer-order counterparts as $\alpha\to1^{-}$.

    For fixed, sufficiently smooth functions $\phi$ and $\sigma$ independent of $\alpha$, and for $t>0$, as $\alpha\to1^-$,
    one has $\omega_\alpha(t)\to1$,
    $I_t^\alpha g\to\int_0^t g(s)\,ds$, and ${}^R\partial_t^{1-\alpha}g\to g$.
    Thus \eqref{eq:subdiffusion-tumor} recovers the corresponding integer-order model in~\cite{wang2025stability}.
    Moreover, for fixed $\varrho,v\in C^1([0,T])$ independent of $\alpha$, the representation ${}^R\partial_s^{1-\alpha}\varrho=\varrho(0)\omega_\alpha(s)+(I_s^\alpha\varrho')(s)$ and dominated convergence yield $\mathcal J_{\varrho,v}(t)\xrightarrow{\alpha\to1^-}\frac12\int_0^t\varrho(s)v^2(s)\,ds$.
    Since $(v(t)-v(s))^2\le\|v'\|_{\infty}^2(t-s)^2$, the integrand of $\mathcal R_{\varrho,v}$ is bounded by $\|\varrho\|_\infty\|v'\|_\infty^2(t-s)^{\alpha}$, and hence $\mathcal R_{\varrho,v}(t)\xrightarrow{\alpha\to1^-}0$.
    Consequently,
    \[
    \mathcal E_\alpha(t)\xrightarrow{\alpha\to1^-}
    E(t)+\frac12\int_0^t\mathcal D_{\mathrm{diff}}(s)\,ds
    +\int_0^t\mathcal D_P(s)\,ds.
    \]
    Thus, as $\alpha\to1^{-}$, the variational energy dissipation law reduces to the classical energy dissipation law:
    \[
    \frac{dE}{dt}+\mathcal D_{\mathrm{diff}}+\mathcal D_P\le0.
    \]
\end{remark}

\begin{remark}[Energy dissipation for a constant proliferation coefficient]
\leavevmode\par\nobreak\noindent
    For a constant proliferation coefficient $P(\phi)\equiv\bar P\ge0$, applying the Riemann--Liouville inequality \eqref{eq:RL-inequality} also to the reaction term yields
    \[
        \frac{d\mathcal E_{\alpha,\bar P}}{dt}+\frac12\omega_\alpha(t)\mathcal D(t)\le0,
        \quad
        \mathcal E_{\alpha,\bar P}(t):=E[\phi,\sigma](t)+\frac12\bigl(I_t^\alpha\mathcal D\bigr)(t).
    \]
    As $\alpha\to1^-$, both laws reduce to the integer-order law in~\cite{wang2025stability}.
\end{remark}

\section{Structure-Preserving Numerical Schemes} \label{sec:numericalscheme}
In this section, we develop a linear relaxation scheme for the subdiffusive tumor growth model \eqref{eq:subdiffusion-tumor}.
We then introduce temporal damping and augment the damped scheme with an a posteriori relaxation correction.
All three schemes conserve total mass and satisfy the discrete variational energy dissipation laws proved below.
Both the schemes and their dissipation laws are asymptotically compatible with their integer-order counterparts as $\alpha\to1^{-}$.
Throughout this section, we use the chemotactic coupling energy $\chi(\phi,\sigma)=-\chi_0\phi\sigma$~\cite{wang2025stability}, where $\chi_0\ge0$ is the chemotactic coupling coefficient.

\subsection{Relaxation Reformulation of the Continuous Model}
Following the linear relaxation idea in~\cite{yu2026linear}, we introduce a regularization parameter $\beta$ and an auxiliary variable $r(\phi)$ to rewrite the nonlinear term $f(\phi)$:
\begin{equation}\label{eq:relaxation-variable}
    r(\phi)=\frac12\phi(1-\phi)+\beta,\quad
    F(\phi)=(r-\beta)^2,\quad
    f(\phi)=(r-\beta)(1-2\phi).
\end{equation}
Using $r(\phi)$, we rewrite \eqref{eq:subdiffusion-tumor} by replacing
$\mu_\phi$ with
\[
    \tilde\mu_\phi
    :=-\epsilon_\phi^2\Delta\phi
    +(r-\beta)(1-2\phi)+\partial_\phi\chi.
\]
The corresponding relaxation energy is defined by
\begin{equation}\label{modienergy}
    \begin{aligned}
    \widetilde E(\phi,\sigma,r)
    := {}& \int_\Omega \Bigl[
        \frac{\epsilon_\phi^2}{2}|\nabla\phi|^2
        +2(r-\beta)\Bigl(\frac12\phi(1-\phi)+\beta\Bigr)-r^2 \\
    &\quad
        +\frac{\sigma^2}{2\delta}+\chi(\phi,\sigma)
        \Bigr]\,d\bm{x}+\beta^2|\Omega|.
    \end{aligned}
\end{equation}

Substituting $r=r(\phi)$ into \eqref{modienergy} gives $\widetilde E[\phi,\sigma,r(\phi)]=E[\phi,\sigma]$, and the reformulated chemical potential satisfies $\tilde\mu_\phi=\mu_\phi$.
Thus, the continuous reformulation of the original model preserves the original free energy, energy evolution identity, and mass conservation.

Applying the argument of Subsection~\ref{subsec:original-energy-dissipation}
with $E$ and $\mu_\phi$ replaced by $\widetilde E$ and $\tilde\mu_\phi$ gives the
same memory-augmented variational structure. Define $\widetilde{\mathcal D}$ and
$\widetilde{\mathcal D}_{\mathrm{diff}}$ by replacing $\mu_\phi$ with
$\tilde\mu_\phi$ in $\mathcal D$ and $\mathcal D_{\mathrm{diff}}$,
respectively. In terms of the auxiliary variable, the variational energy takes the form
\begin{equation*}
    \widetilde{\mathcal E}_{\alpha}[\phi,\sigma,r](t)
    :={}
    \widetilde E[\phi,\sigma,r](t)
    +\frac12\left(I_t^\alpha\widetilde{\mathcal D}\right)(t)
    +\int_\Omega
    \mathcal J_{P(\phi),\,\mu_\sigma-\tilde\mu_\phi}\,d\bm{x}.
\end{equation*}
The dissipation and reaction-history terms also coincide, so the continuous
relaxation reformulation preserves the variational energy exactly:
$\widetilde{\mathcal E}_\alpha[\phi,\sigma,r(\phi)]
=\mathcal E_\alpha[\phi,\sigma]$.
The relaxed system therefore satisfies the variational energy dissipation inequality
\begin{equation}\label{eq:relaxed-continuous-variational-energy} 
    \frac{d\widetilde{\mathcal E}_\alpha}{dt}
    +\frac12\omega_\alpha(t)\widetilde{\mathcal D}_{\mathrm{diff}}(t)
    +\int_\Omega\mathcal R_{P(\phi),\,\mu_\sigma-\tilde\mu_\phi}(\bm{x},t)\,d\bm{x}
    \le0.
\end{equation}

\subsection{A Discrete Weighted Riemann--Liouville Identity}
We discretize the Riemann--Liouville derivative ${}^R\partial_t^{1-\alpha}v=\partial_t I_t^\alpha v$ using the nonuniform $L1_R$ formula~\cite{liao2021energy}.
Let $0=t_0<t_1<\cdots<t_N=T$ be a nonuniform time grid with $\tau_n:=t_n-t_{n-1}$, $1\le n\le N$, and $\tau_*:=\max_{1\le k\le N}\tau_k$.
For $u^n\approx u(t_n)$, define $\nabla_\tau u^{n+1}:=u^{n+1}-u^n$, $\partial_\tau u^{n+\frac12}:=\nabla_\tau u^{n+1}/\tau_{n+1}$, and $u^{n+\frac12}:=(u^{n+1}+u^n)/2$.
Let $\Pi_0v$ denote the piecewise constant interpolant $(\Pi_0v)(t)=v^{k-\frac12}$ for $t\in(t_{k-1},t_k], 1\le k\le N$.
Approximating $v$ by $\Pi_0v$ in the fractional integral gives
\[
    \left(I_t^\alpha v\right)(t_n)
    \approx \left(I_\tau^\alpha v\right)^n
    :=\sum_{k=1}^n
    \varpi_{n-k}^{(n,\alpha)}v^{k-\frac12},
    \quad
    \varpi_{n-k}^{(n,\alpha)}
    :=\int_{t_{k-1}}^{t_k}\omega_\alpha(t_n-s)\,ds>0.
\]
Set $(I_\tau^\alpha v)^0:=0$.
The $L1_R$ formula is then defined by
\begin{equation}\label{eq:RL-direct-discrete}
    \left({}^R D_\tau^{1-\alpha}v\right)^{n-\frac12}
    :=\frac{\left(I_\tau^\alpha v\right)^n
    -\left(I_\tau^\alpha v\right)^{n-1}}{\tau_n}
    =\frac{1}{\tau_n}\sum_{k=1}^n
    a_{n-k}^{(n,\mathrm{RL})}v^{k-\frac12},
    \quad 1\le n\le N,
\end{equation}
where the convolution kernels satisfy
\begin{equation}\label{eq:RL-direct-kernels}
    a_0^{(n,\mathrm{RL})}:=\varpi_0^{(n,\alpha)}>0,
    \quad
    a_{n-k}^{(n,\mathrm{RL})}
    :=\varpi_{n-k}^{(n,\alpha)}
    -\varpi_{n-k-1}^{(n-1,\alpha)}<0,
    \quad 1\le k<n\le N.
\end{equation}
The following lemma gives a quantitative lower bound for the discrete quadratic form.
\begin{lemma}[\cite{liao2021energy}]\label{lem:RL-direct-kernel-positive}
    For $1\le n\le N$ and any real sequence $\{z_k\}_{k=1}^n$,
    the kernels in \eqref{eq:RL-direct-kernels} satisfy
    \begin{equation}\notag
            2z_k\sum_{j=1}^k a_{k-j}^{(k,\mathrm{RL})}z_j
            \ge \ z_k^2\sum_{j=1}^k a_{k-j}^{(k,\mathrm{RL})}
            + \sum_{j=1}^k \varpi_{k-j}^{(k,\alpha)}z_j^2  - \sum_{j=1}^{k-1} \varpi_{k-j-1}^{(k-1,\alpha)}z_j^2,
            \quad 1\le k\le n.
    \end{equation}
    Consequently,
    \begin{equation*}
        2\sum_{k=1}^n z_k
        \sum_{j=1}^k a_{k-j}^{(k,\mathrm{RL})}z_j
        \ge \sum_{k=1}^n
        \left(\varpi_{n-k}^{(n,\alpha)}
        +\sum_{j=1}^k a_{k-j}^{(k,\mathrm{RL})}\right)z_k^2.
    \end{equation*}
    The right-hand side is strictly positive whenever
    $\{z_k\}_{k=1}^n\not\equiv0$.
\end{lemma}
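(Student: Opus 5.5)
The plan is to prove the one-step inequality first, using only the sign structure of the kernels in \eqref{eq:RL-direct-kernels}. Then I sum it over $k$ and telescope, and finally compute the row sums of the kernels in closed form to get positivity.

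\textbf{Step 1 (sign of the kernels).} The statement \eqref{eq:RL-direct-kernels} already asserts $a_0^{(k,\mathrm{RL})}=\varpi_0^{(k,\alpha)}>0$ and $a_{k-j}^{(k,\mathrm{RL})}<0$ for $j<k$. For completeness, the negativity follows because $\omega_\alpha$ is strictly decreasing on $(0,\infty)$ for $0<\alpha<1$:
\[
\varpi_{k-j}^{(k,\alpha)}=\int_{t_{j-1}}^{t_j}\omega_\alpha(t_k-s)\,ds<\int_{t_{j-1}}^{t_j}\omega_\alpha(t_{k-1}-s)\,ds=\varpi_{k-j-1}^{(k-1,\alpha)} .
\]

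\textbf{Step 2 (one-step inequality).} Fix $k$. For $j<k$ we have $a_{k-j}^{(k,\mathrm{RL})}<0$, so Young's inequality $2z_kz_j\le z_k^2+z_j^2$ gives
\[
2a_{k-j}^{(k,\mathrm{RL})}z_kz_j\ge a_{k-j}^{(k,\mathrm{RL})}(z_k^2+z_j^2).
\]
Summing over $j<k$ and adding the diagonal term $2a_0^{(k,\mathrm{RL})}z_k^2$ yields
\[
2z_k\sum_{j=1}^k a_{k-j}^{(k,\mathrm{RL})}z_j\ge z_k^2\sum_{j=1}^k a_{k-j}^{(k,\mathrm{RL})}+a_0^{(k,\mathrm{RL})}z_k^2+\sum_{j=1}^{k-1}a_{k-j}^{(k,\mathrm{RL})}z_j^2 .
\]
Now insert $a_0^{(k,\mathrm{RL})}=\varpi_0^{(k,\alpha)}$ and $a_{k-j}^{(k,\mathrm{RL})}=\varpi_{k-j}^{(k,\alpha)}-\varpi_{k-j-1}^{(k-1,\alpha)}$. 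The last two terms then become exactly
\[
\sum_{j=1}^k\varpi_{k-j}^{(k,\alpha)}z_j^2-\sum_{j=1}^{k-1}\varpi_{k-j-1}^{(k-1,\alpha)}z_j^2 ,
\]
which is the first claim.

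\textbf{Step 3 (summation).} Set $G_k:=\sum_{j=1}^k\varpi_{k-j}^{(k,\alpha)}z_j^2$ and $G_0:=0$. The last two terms of the one-step inequality are then $G_k-G_{k-1}$. Summing over $k=1,\dots,n$ telescopes to $G_n=\sum_{k=1}^n\varpi_{n-k}^{(n,\alpha)}z_k^2$. Combining this with the diagonal terms $\sum_k z_k^2\sum_j a_{k-j}^{(k,\mathrm{RL})}$ gives the second claim.

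\textbf{Step 4 (strict positivity).} This is the only step needing a small computation. The row sum telescopes as
\[
\sum_{j=1}^k a_{k-j}^{(k,\mathrm{RL})}=\int_0^{t_k}\omega_\alpha(t_k-s)\,ds-\int_0^{t_{k-1}}\omega_\alpha(t_{k-1}-s)\,ds=\frac{t_k^\alpha-t_{k-1}^\alpha}{\Gamma(\alpha+1)}>0 .
\]
Together with $\varpi_{n-k}^{(n,\alpha)}>0$, every coefficient on the right-hand side is strictly positive. Hence the right-hand side is strictly positive unless all $z_k$ vanish.
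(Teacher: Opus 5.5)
Your proof is correct. The paper does not prove this lemma itself but imports it from \cite{liao2021energy}, and your argument is the standard one behind that result: the sign pattern $a_0^{(k,\mathrm{RL})}>0$, $a_{k-j}^{(k,\mathrm{RL})}<0$ for $j<k$ combined with $2z_kz_j\le z_k^2+z_j^2$, then telescoping $G_k-G_{k-1}$, then evaluating the row sum as $(t_k^\alpha-t_{k-1}^\alpha)/\Gamma(1+\alpha)=\int_{t_{k-1}}^{t_k}\omega_\alpha(s)\,ds>0$.
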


Taking $z_j=v^{j-\frac12}$ in Lemma~\ref{lem:RL-direct-kernel-positive}, we obtain the following positivity property of the $L1_R$ formula:
\begin{equation}\notag
    \sum_{j=1}^n \tau_j v^{j-\frac{1}{2}}\left({}^R D_\tau^{1-\alpha}v\right)^{j-\frac{1}{2}} = \int_0^{t_n} (\Pi_0 v)(t)\frac{\partial}{\partial t}\int_0^t \omega_\alpha(t-s)(\Pi_0 v)(s)\,ds\,dt \ge 0,
    \quad n\ge 1.
\end{equation}

To handle the time-dependent reaction coefficient, we establish a weighted discrete Riemann--Liouville identity.
\begin{lemma}[Discrete weighted Riemann--Liouville identity]\label{thm:discrete-weighted-RL}
    Let $\{\varrho^{n+\frac12}\}_{n=0}^{N-1}$ be a nonnegative sequence and $\{v^{n+\frac12}\}_{n=0}^{N-1}$ be a real sequence. 
    Then, for $0\le n\le N-1$,
    \begin{equation}\label{eq:discrete-weighted-RL-identity}
        v^{n+\frac12}\Bigl({}^R D_\tau^{1-\alpha}(\varrho v)\Bigr)^{n+\frac12}
        =
        \frac12\Bigl({}^R D_\tau^{1-\alpha}(\varrho v^2)\Bigr)^{n+\frac12}
        +\partial_\tau \mathcal J_{\tau}^{n+\frac12}[\varrho,v]
        +\mathcal R_{\tau}^{n+\frac12}[\varrho,v].
    \end{equation}
    Here
    \[
        \partial_\tau \mathcal J_{\tau}^{n+\frac12}[\varrho,v]
        :=
        \frac{\mathcal J_{\tau}^{n+1}[\varrho,v]-\mathcal J_{\tau}^{n}[\varrho,v]}{\tau_{n+1}}, 
    \]
    with, for $0\le m\le N$,
    \begin{equation}\notag
        \mathcal J_{\tau}^{m}[\varrho,v]:=\frac12\sum_{k=1}^{m}\tau_k(v^{k-\frac12})^2\bigl({}^R D_\tau^{1-\alpha}\varrho\bigr)^{k-\frac12}.
    \end{equation}
    The remainder term is given by
    \begin{equation*}
        \mathcal R_{\tau}^{n+\frac12}[\varrho,v]
        :=
        -\frac{1}{2\tau_{n+1}}
        \sum_{k=1}^{n+1}
        a_{n+1-k}^{(n+1,\mathrm{RL})}\varrho^{k-\frac12}
        \Bigl(v^{n+\frac12}-v^{k-\frac12}\Bigr)^2 \geq 0.
    \end{equation*}
\end{lemma}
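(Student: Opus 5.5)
The identity is purely algebraic. I would expand everything in terms of the $L1_R$ kernels of \eqref{eq:RL-direct-discrete}--\eqref{eq:RL-direct-kernels} at level $n+1$ and compare coefficients of $\varrho^{k-\frac12}$ for each $1\le k\le n+1$. Write $w:=v^{n+\frac12}$ and $a_j:=a_{j}^{(n+1,\mathrm{RL})}$. Then
\[
w\Bigl({}^R D_\tau^{1-\alpha}(\varrho v)\Bigr)^{n+\frac12}=\frac{1}{\tau_{n+1}}\sum_{k=1}^{n+1}a_{n+1-k}\varrho^{k-\frac12}\,w\,v^{k-\frac12},
\qquad
\frac12\Bigl({}^R D_\tau^{1-\alpha}(\varrho v^2)\Bigr)^{n+\frac12}=\frac{1}{2\tau_{n+1}}\sum_{k=1}^{n+1}a_{n+1-k}\varrho^{k-\frac12}(v^{k-\frac12})^2.
\]
Here the products are interpreted pointwise at half-integer levels, i.e.\ $(\varrho v)^{k-\frac12}:=\varrho^{k-\frac12}v^{k-\frac12}$. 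I would state this convention explicitly, since the lemma only gives half-level sequences.

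The key algebraic step is the elementary identity
\[
wv^{k-\frac12}=\tfrac12(v^{k-\frac12})^2+\tfrac12w^2-\tfrac12\bigl(w-v^{k-\frac12}\bigr)^2 .
\]
I would insert it termwise into the first sum. The piece $\frac12(v^{k-\frac12})^2$ reproduces exactly $\frac12({}^R D_\tau^{1-\alpha}(\varrho v^2))^{n+\frac12}$. The piece $-\frac12(w-v^{k-\frac12})^2$ gives precisely $\mathcal R_\tau^{n+\frac12}[\varrho,v]$ as defined. The remaining piece is
\[
\frac{w^2}{2\tau_{n+1}}\sum_{k=1}^{n+1}a_{n+1-k}\varrho^{k-\frac12}=\frac12 w^2\bigl({}^R D_\tau^{1-\alpha}\varrho\bigr)^{n+\frac12}.
\]
I would then check that this equals $\partial_\tau\mathcal J_\tau^{n+\frac12}[\varrho,v]$. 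By definition, $\mathcal J_\tau^{n+1}-\mathcal J_\tau^n=\frac12\tau_{n+1}(v^{n+\frac12})^2({}^R D_\tau^{1-\alpha}\varrho)^{n+\frac12}$, since the sum telescopes to its last term. Dividing by $\tau_{n+1}$ matches the remaining piece exactly, and summing the three pieces gives \eqref{eq:discrete-weighted-RL-identity}.

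Finally, nonnegativity of $\mathcal R_\tau^{n+\frac12}$ follows from the signs in \eqref{eq:RL-direct-kernels}. The $k=n+1$ term vanishes because $(w-v^{n+\frac12})^2=0$, so the sign of $a_0>0$ is irrelevant. For $k\le n$ we have $a_{n+1-k}<0$, together with $\varrho^{k-\frac12}\ge0$ and a square, so each term of $-\frac{1}{2\tau_{n+1}}\sum(\cdot)$ is nonnegative. The only step needing care is the $k=n+1$ cancellation and the strict negativity of the off-diagonal kernels, which is taken from \eqref{eq:RL-direct-kernels}; no step seems a genuine obstacle.
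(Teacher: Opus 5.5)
Your proposal is correct and follows essentially the same route as the paper: it expands the $L1_R$ sum at level $n+1$ and applies the polarization identity $2wv^{k-\frac12}=w^2+(v^{k-\frac12})^2-(w-v^{k-\frac12})^2$ termwise. It then identifies the $w^2$ piece with $\partial_\tau\mathcal J_\tau^{n+\frac12}[\varrho,v]$ via the one-term telescoping, and obtains $\mathcal R_\tau^{n+\frac12}\ge0$ from the vanishing $k=n+1$ term and the negativity of the off-diagonal kernels in \eqref{eq:RL-direct-kernels}. Stating the pointwise half-level product convention explicitly is a harmless clarification of what the paper uses implicitly.
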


\begin{proof}
    Fix $0\le n\le N-1$.
    By \eqref{eq:RL-direct-discrete}, we obtain
    \begin{equation}\label{eq:discrete-weighted-proof-1}
        \begin{aligned}
            &v^{n+\frac12}\Bigl({}^R D_\tau^{1-\alpha}(\varrho v)\Bigr)^{n+\frac12}=\frac{1}{\tau_{n+1}}\sum_{k=1}^{n+1}a_{n+1-k}^{(n+1,\mathrm{RL})}\varrho^{k-\frac12}v^{n+\frac12}v^{k-\frac12}\\
            &\quad=\frac{1}{2\tau_{n+1}}\sum_{k=1}^{n+1}a_{n+1-k}^{(n+1,\mathrm{RL})}\varrho^{k-\frac12}\Bigl[(v^{n+\frac12})^2+(v^{k-\frac12})^2-\bigl(v^{n+\frac12}-v^{k-\frac12}\bigr)^2\Bigr]\\
            &\quad=\frac12\Bigl({}^R D_\tau^{1-\alpha}(\varrho v^2)\Bigr)^{n+\frac12}+\frac{(v^{n+\frac12})^2}{2}\Bigl({}^R D_\tau^{1-\alpha}\varrho\Bigr)^{n+\frac12}+\mathcal R_\tau^{n+\frac12}[\varrho,v].
        \end{aligned}
    \end{equation}
    By the definition of $\mathcal J_\tau^m[\varrho,v]$,
    \[
        \partial_\tau\mathcal J_\tau^{n+\frac12}[\varrho,v]=\frac12(v^{n+\frac12})^2\Bigl({}^R D_\tau^{1-\alpha}\varrho\Bigr)^{n+\frac12}.
    \]
    Substituting this identity into \eqref{eq:discrete-weighted-proof-1} proves \eqref{eq:discrete-weighted-RL-identity}.
    
    Finally, $\mathcal R_\tau^{n+\frac12}[\varrho,v]\ge0$, as the $k=n+1$ term vanishes and the remaining terms are nonnegative by \eqref{eq:RL-direct-kernels} and $\varrho^{k-\frac12}\ge0$.
\end{proof}

\subsection{Linear Relaxation Scheme} 
For $1\le n\le N-1$, define the explicit predictor and the evaluated proliferation coefficient
\begin{equation}\label{eq:original-extrapolation}
    \widehat\phi^{n+\frac12}:=\phi^n+\frac{\tau_{n+1}}{2\tau_n}\left(\phi^n-\phi^{n-1}\right),
    \quad \widehat P^{n+\frac12}:=P\!\left(\widehat\phi^{n+\frac12}\right),
\end{equation}
and the mesh parameters
\begin{equation}\label{eq:weighted-r-mesh}
    \rho_{n+1}:=\frac{\tau_{n+1}}{\tau_n},
    \quad w_n:=\frac{1}{1+\rho_{n+1}},
    \quad c_n:=1-2w_n=\frac{\tau_{n+1}-\tau_n}{\tau_n+\tau_{n+1}}.
\end{equation}
We write $\chi^n:=\chi(\phi^n,\sigma^n)$ and denote by $\partial_\phi\chi^{n+\frac12}$ and $\partial_\sigma\chi^{n+\frac12}$ the partial derivatives of $\chi$ evaluated at $(\phi^{n+\frac12},\sigma^{n+\frac12})$.

The $L1_R$ discretization and the relaxation update on nonuniform meshes give
\begin{subequations} \label{CapTumorSystem_Relaxation_Discret}
    \begin{align}
        \partial_\tau\phi^{n+\frac12}
        &=\Bigl({}^R D_\tau^{1-\alpha}
        \bigl(M_\phi\Delta\tilde\mu_\phi
        +\widehat P(\mu_\sigma-\tilde\mu_\phi)\bigr)
        \Bigr)^{n+\frac12}, \label{TSD1}\\
        \tilde \mu_\phi^{n+\frac{1}{2}}& =  -\epsilon_\phi^2 \Delta \phi ^{n+\frac{1}{2}} + \left(r^{n+\frac{1}{2}} - \beta \right)(1-2\phi^{n+\frac{1}{2}}) + 
        \partial_\phi \chi ^{n+\frac{1}{2}}, \label{TSD2}\\
        w_n r^{n+\frac12}+(1-w_n)r^{n-\frac12}&=r(\phi^n),\label{TSD3}\\
        \partial_\tau\sigma^{n+\frac12}
        &=\Bigl({}^R D_\tau^{1-\alpha}
        \bigl(M_\sigma\Delta\mu_\sigma
        -\widehat P(\mu_\sigma-\tilde\mu_\phi)\bigr)
        \Bigr)^{n+\frac12}, \label{TSD4}\\
        \mu_\sigma^{n+\frac{1}{2}}& =  \frac{1}{\delta} \sigma ^{n+\frac{1}{2}}   + \partial_\sigma \chi ^{n+\frac{1}{2}}.\label{TSD5}
    \end{align}
\end{subequations}
The corresponding discrete relaxation energy is defined by
\begin{multline}\label{dismodienergy}
    \widetilde E[\phi^{n+1},\sigma^{n+1},r^{n+\frac12}]
    :=\int_\Omega\Bigl[\frac{\epsilon_\phi^2}{2}|\nabla\phi^{n+1}|^2
    +2(r^{n+\frac12}-\beta)r(\phi^{n+1})\\
    -(r^{n+\frac12})^2+\frac{(\sigma^{n+1})^2}{2\delta}
    +\chi^{n+1}\Bigr]\,d\bm{x}
    +\beta^2|\Omega|.
\end{multline}
Using $F(\phi)=(r(\phi)-\beta)^2$ and completing the square in \eqref{dismodienergy}, we obtain
\[
\widetilde E[\phi^{n+1},\sigma^{n+1},r^{n+\frac12}]-E(\phi^{n+1},\sigma^{n+1})=-\|r^{n+\frac12}-r(\phi^{n+1})\|^2.
\]

Define the discrete variational energy 
\begin{equation}\label{eq:original-variational-energy}
    \left(\widetilde{\mathcal E}_{\tau,\alpha}\right)^{n+1}
    :=\widetilde E
    \left[\phi^{n+1},\sigma^{n+1},r^{n+\frac12}\right]
    +\frac12\sum_{k=1}^{n+1}
    \varpi_{n+1-k}^{(n+1,\alpha)}
    \widetilde{\mathcal D}^{k-\frac12}
    +\mathcal J_{\tau,P}^{\mathrm{reac},\,n+1},
\end{equation}
where
$\widetilde{\mathcal D}^{n+\frac12}
:=\widetilde{\mathcal D}_{\mathrm{diff}}^{n+\frac12}
+\widetilde{\mathcal D}_{\mathrm{reac}}^{n+\frac12}$,
with
\begin{align}\label{diffusive_term}
    \widetilde{\mathcal D}_{\mathrm{diff}}^{n+\frac12}
    &:=M_\phi\left\|\nabla\tilde\mu_\phi^{n+\frac12}\right\|^2
    +M_\sigma\left\|\nabla\mu_\sigma^{n+\frac12}\right\|^2,\\
    \widetilde{\mathcal D}_{\mathrm{reac}}^{n+\frac12}
    &:=\left\|\sqrt{\widehat P^{n+\frac12}}
    \left(\mu_\sigma^{n+\frac12}
    -\tilde\mu_\phi^{n+\frac12}\right)\right\|^2, \label{eq:lr-reaction-dissipation}\\
    \mathcal J_{\tau,P}^{\mathrm{reac},\,m}
    &:=\int_\Omega
    \mathcal J_\tau^m
    \!\left[\widehat P,\mu_\sigma-\tilde\mu_\phi\right]
    \,d\bm x,\quad 0\le m\le N.\label{eq:lr-reaction-memory}
\end{align}
Set $r^{-\frac12}:=r(\phi^0)$.
For $n=0$,
a provisional solve of \eqref{TSD1}, \eqref{TSD2}, \eqref{TSD4}, and \eqref{TSD5} with $\widehat P^{\frac12}=P(\phi^0)$ and $r^{\frac12}=r(\phi^0)$ gives $\phi_*^1$.
We then set $\widehat\phi^{\frac12}:=(\phi^0+\phi_*^1)/2$,
$\widehat P^{\frac12}:=P(\widehat\phi^{\frac12})$,
and $r^{\frac12}:=r(\widehat\phi^{\frac12})$,
and compute $\phi^1$ and $\sigma^1$ from \eqref{CapTumorSystem_Relaxation_Discret} without \eqref{TSD3};
see Algorithm~\ref{algrtm}.
The initial variational energy is $(\widetilde{\mathcal E}_{\tau,\alpha})^0:=\widetilde E[\phi^0,\sigma^0,r^{-\frac12}]=E(\phi^0,\sigma^0)$.
 
The following energy dissipation result is established under the restriction of nondecreasing time steps. 
The same time-step restriction was also imposed in the analysis of finite difference methods for nonlinear fractional differential equations~\cite{li2016finite}.

\begin{theorem}[Mass conservation and variational energy dissipation]
\label{thm:1}
    Under periodic or homogeneous Neumann boundary conditions, the scheme \eqref{CapTumorSystem_Relaxation_Discret} satisfies
    \begin{equation}\label{eq:lr-mass-conservation}
    (\phi^n+\sigma^n,1)=(\phi^0+\sigma^0,1),\quad
    1\le n\le N.
    \end{equation}
    Assume that $\widehat P^{k-\frac12}\ge0$ for $1\le k\le N$.
    For $1\le n\le N-1$, if $\tau_{n+1}\ge\tau_n$, then the linear relaxation (LR) scheme \eqref{CapTumorSystem_Relaxation_Discret} satisfies
    \begin{equation}\label{fractional_energy_dissipation}
            \partial_\tau\left(\widetilde{\mathcal E}_{\tau,\alpha}\right)^{n+\frac12} +\frac{1}{2\tau_{n+1}}\left(\int_{t_n}^{t_{n+1}}\omega_\alpha(s)\,ds\right)\widetilde{\mathcal D}_{\mathrm{diff}}^{n+\frac12} 
            +\mathcal R_{\tau,P}^{\mathrm{reac},\,n+\frac12}\le 0,
    \end{equation}
    where
    \begin{equation}\label{eq:lr-reaction-remainder}
    \mathcal R_{\tau,P}^{\mathrm{reac},\,n+\frac12}:=\int_\Omega\mathcal R_\tau^{n+\frac12}[\widehat P,\mu_\sigma-\tilde\mu_\phi]\,d\bm x \ge 0.
    \end{equation}
\end{theorem}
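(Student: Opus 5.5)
The plan is to prove mass conservation directly and to obtain the energy law by testing \eqref{TSD1} with $\tilde\mu_\phi^{n+\frac12}$ and \eqref{TSD4} with $\mu_\sigma^{n+\frac12}$. The left-hand sides are then handled by exact discrete chain rules plus the relaxation recurrence \eqref{TSD3}. The memory terms on the right are handled by Lemma~\ref{lem:RL-direct-kernel-positive} for the diffusive parts and by Lemma~\ref{thm:discrete-weighted-RL} for the reaction part.

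\emph{Mass conservation.} Add \eqref{TSD1} and \eqref{TSD4}; the reaction contributions $\pm\widehat P(\mu_\sigma-\tilde\mu_\phi)$ cancel. Integrate over $\Omega$ and apply the boundary conditions, so that $(\Delta\tilde\mu_\phi^{k-\frac12},1)=(\Delta\mu_\sigma^{k-\frac12},1)=0$. Since ${}^RD_\tau^{1-\alpha}$ is linear and commutes with spatial integration, this gives $(\phi^{n+1}+\sigma^{n+1},1)=(\phi^n+\sigma^n,1)$ at every step. The first step $n=0$ has the same form with a different $\widehat P^{\frac12}$ and $r^{\frac12}$, so the same argument applies; induction then gives \eqref{eq:lr-mass-conservation}.

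\emph{Left-hand side.} Multiply the tested equations by $\tau_{n+1}$ and treat each part of $\tilde\mu_\phi$ and $\mu_\sigma$ separately.
\begin{itemize}
\item The gradient term and the $\sigma^2/(2\delta)$ term are exact midpoint telescopings.
\item For $\chi=-\chi_0\phi\sigma$, the identity $\phi^{n+1}\sigma^{n+1}-\phi^n\sigma^n=\nabla_\tau\phi^{n+1}\sigma^{n+\frac12}+\nabla_\tau\sigma^{n+1}\phi^{n+\frac12}$ makes the two cross terms telescope $\int_\Omega\chi$ exactly.
\item For the nonlinear part, use $r(\phi^{n+1})-r(\phi^n)=\tfrac12\nabla_\tau\phi^{n+1}(1-2\phi^{n+\frac12})$. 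This gives $(\nabla_\tau\phi^{n+1},(r^{n+\frac12}-\beta)(1-2\phi^{n+\frac12}))=2(r^{n+\frac12}-\beta,\,r(\phi^{n+1})-r(\phi^n))$.
\end{itemize}
Comparing with the increment of the $r$-part of \eqref{dismodienergy}, the $\beta$-terms cancel. With $a=r^{n+\frac12}$ and $b=r^{n-\frac12}$, the leftover is $2(a-b)r(\phi^n)-a^2+b^2$. Substituting \eqref{TSD3}, $r(\phi^n)=w_na+(1-w_n)b$, turns this into $(2w_n-1)(a-b)^2=-c_n(a-b)^2$. Hence
\[
\widetilde E^{n+1}-\widetilde E^n=\tau_{n+1}\bigl[(\partial_\tau\phi^{n+\frac12},\tilde\mu_\phi^{n+\frac12})+(\partial_\tau\sigma^{n+\frac12},\mu_\sigma^{n+\frac12})\bigr]-c_n\|r^{n+\frac12}-r^{n-\frac12}\|^2 .
\]
Since $c_n\ge0$ exactly when $\tau_{n+1}\ge\tau_n$, this is where the step restriction enters. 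I expect this auxiliary-variable algebra to be the main obstacle: it is the only place where the mesh ratio matters and where the recurrence must be used with the correct weights. For $n=1$, I would check that $\widetilde E^1$ is defined with $r^{\frac12}$ from the special first step, so the telescoping still holds with $b=r^{\frac12}$.

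\emph{Right-hand side.} Integrating by parts, the diffusive parts become $-\frac{M_\phi}{\tau_{n+1}}\sum_k a^{(n+1,\mathrm{RL})}_{n+1-k}(\nabla\tilde\mu_\phi^{k-\frac12},\nabla\tilde\mu_\phi^{n+\frac12})$, and similarly for $\sigma$. Apply the first inequality of Lemma~\ref{lem:RL-direct-kernel-positive} pointwise with $z_j=\nabla\tilde\mu_\phi^{j-\frac12}$ and then integrate. Using the telescoping $\sum_{j=1}^{k}a^{(k,\mathrm{RL})}_{k-j}=\int_{t_{k-1}}^{t_k}\omega_\alpha(s)\,ds$, this bound equals
\[
-\frac{1}{2\tau_{n+1}}\Bigl[\int_{t_n}^{t_{n+1}}\omega_\alpha\,ds\;\widetilde{\mathcal D}_{\mathrm{diff}}^{n+\frac12}+\sum_{k\le n+1}\varpi^{(n+1,\alpha)}_{n+1-k}\widetilde{\mathcal D}_{\mathrm{diff}}^{k-\frac12}-\sum_{k\le n}\varpi^{(n,\alpha)}_{n-k}\widetilde{\mathcal D}_{\mathrm{diff}}^{k-\frac12}\Bigr].
\]
The reaction parts combine to $-\bigl(({}^RD_\tau^{1-\alpha}(\widehat Pv))^{n+\frac12},v^{n+\frac12}\bigr)$ with $v=\mu_\sigma-\tilde\mu_\phi$. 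Apply Lemma~\ref{thm:discrete-weighted-RL} pointwise with $\varrho=\widehat P\ge0$. By \eqref{eq:RL-direct-discrete}, its half-term is $\frac{1}{2\tau_{n+1}}$ times the increment of $\sum_k\varpi_{\cdot}\widetilde{\mathcal D}_{\mathrm{reac}}^{k-\frac12}$. The other two terms are $\partial_\tau\mathcal J^{\mathrm{reac}}_{\tau,P}$ and $\mathcal R^{\mathrm{reac}}_{\tau,P}$, and the latter is nonnegative by that lemma.

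Collecting everything and dropping $-c_n\|r^{n+\frac12}-r^{n-\frac12}\|^2\le0$ yields \eqref{fractional_energy_dissipation} with the energy \eqref{eq:original-variational-energy}.
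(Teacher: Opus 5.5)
Your proposal is correct and follows essentially the same route as the paper: you test \eqref{TSD1} and \eqref{TSD4} with $\tilde\mu_\phi^{n+\frac12}$ and $\mu_\sigma^{n+\frac12}$, bound the diffusive memory terms by Lemma~\ref{lem:RL-direct-kernel-positive} and the reaction term by Lemma~\ref{thm:discrete-weighted-RL}, and use \eqref{TSD3} through the identity $2(a-b)(w_na+(1-w_n)b)=a^2-b^2+(2w_n-1)(a-b)^2$ to produce the term $c_n\|r^{n+\frac12}-r^{n-\frac12}\|^2$, which has a favorable sign when $\tau_{n+1}\ge\tau_n$. Your explicit checks of the chemotactic telescoping, of the kernel sum $\sum_{j=1}^k a^{(k,\mathrm{RL})}_{k-j}=\int_{t_{k-1}}^{t_k}\omega_\alpha(s)\,ds$, and of the startup value $r^{\frac12}$ at $n=1$ fill in details that the paper leaves implicit.
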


\begin{proof}
    Integrating \eqref{TSD1} and \eqref{TSD4} over $\Omega$ and adding the results, the reaction terms cancel and the Laplacian terms vanish by the boundary conditions, which gives $(\phi^{k+1}+\sigma^{k+1},1)=(\phi^k+\sigma^k,1)$.
    Summing over $k=0,\ldots,n-1$ yields \eqref{eq:lr-mass-conservation}.

    Taking the $L^2$ inner products of \eqref{TSD1} and \eqref{TSD4} with $\tau_{n+1}\tilde\mu_\phi^{n+\frac12}$ and $\tau_{n+1}\mu_\sigma^{n+\frac12}$, respectively, and integrating by parts gives
    \begin{equation}\label{proof1}
    \begin{aligned}
    &(\tilde\mu_\phi^{n+\frac12},\nabla_\tau\phi^{n+1})+(\mu_\sigma^{n+\frac12},\nabla_\tau\sigma^{n+1})\\
    &=-M_\phi\left(\sum_{k=1}^{n+1}a_{n+1-k}^{(n+1,\mathrm{RL})}\nabla\tilde\mu_\phi^{k-\frac12},\nabla\tilde\mu_\phi^{n+\frac12}\right)-M_\sigma\left(\sum_{k=1}^{n+1}a_{n+1-k}^{(n+1,\mathrm{RL})}\nabla\mu_\sigma^{k-\frac12},\nabla\mu_\sigma^{n+\frac12}\right)\\
    &\quad-\left(\sum_{k=1}^{n+1}a_{n+1-k}^{(n+1,\mathrm{RL})}\widehat P^{k-\frac12}(\mu_\sigma^{k-\frac12}-\tilde\mu_\phi^{k-\frac12}),\mu_\sigma^{n+\frac12}-\tilde\mu_\phi^{n+\frac12}\right).
    \end{aligned}
    \end{equation}
    Applying Lemma~\ref{lem:RL-direct-kernel-positive} to the first two terms on the right-hand side of \eqref{proof1} and
    Lemma~\ref{thm:discrete-weighted-RL} to the third yields
    \begin{equation}\label{ineq_left}
    \begin{aligned}
    \mathrm{RHS}\le{}&-\frac{\tau_{n+1}}2\partial_\tau(I_\tau^\alpha\widetilde{\mathcal D})^{n+\frac12}-\frac12\left(\int_{t_n}^{t_{n+1}}\omega_\alpha(s)\,ds\right)\widetilde{\mathcal D}_{\mathrm{diff}}^{n+\frac12}\\
    &\quad-\tau_{n+1}\left(\partial_\tau\mathcal J_{\tau,P}^{\mathrm{reac},\,n+\frac12}+\mathcal R_{\tau,P}^{\mathrm{reac},\,n+\frac12}\right).
    \end{aligned}
    \end{equation}
    Combining \eqref{eq:relaxation-variable} and \eqref{TSD3} yields
    \begin{equation}\label{proof2}
    \begin{aligned}
    &((r^{n+\frac12}-\beta)(1-2\phi^{n+\frac12}),\nabla_\tau\phi^{n+1}) =2(r^{n+\frac12}-\beta,r(\phi^{n+1})-r(\phi^n))\\
    &=2((r^{n+\frac12}-\beta)r(\phi^{n+1})-(r^{n-\frac12}-\beta)r(\phi^n),1)\\
    &\quad-(\|r^{n+\frac12}\|^2-\|r^{n-\frac12}\|^2)+c_n\|r^{n+\frac12}-r^{n-\frac12}\|^2.
    \end{aligned}
    \end{equation}
    Here we used the identity $2(a-b)(w_na+(1-w_n)b)=a^2-b^2+(2w_n-1)(a-b)^2,\ \forall a,b\in\mathbb R$.    
    Using \eqref{proof2} and \eqref{TSD3}, the left-hand side of \eqref{proof1} becomes
    \begin{equation}\label{right_dis}
    \mathrm{LHS}=\widetilde E[\phi^{n+1},\sigma^{n+1},r^{n+\frac12}]-\widetilde E[\phi^n,\sigma^n,r^{n-\frac12}]+c_n\|r^{n+\frac12}-r^{n-\frac12}\|^2.
    \end{equation}
    Combining \eqref{proof1}, \eqref{ineq_left}, and \eqref{right_dis},
    dividing by $\tau_{n+1}$, and noting that $c_n\ge0$ since
    $\tau_{n+1}\ge\tau_n$, we obtain
    \eqref{fractional_energy_dissipation}
    by \eqref{eq:original-variational-energy}.
\end{proof}

\begin{remark}[Local consistency of the relaxation relation]\label{remark:weighted-consistency}
    For exact nodal values of a sufficiently smooth $\phi$,
    Taylor expansion about $t_n$ gives
    \[
    \begin{aligned}
    &r(\phi^n)-w_nr(\phi^{n+\frac12})-(1-w_n)r(\phi^{n-\frac12})\\
    &=-\frac{\tau_n\tau_{n+1}}8\left((1-2\phi(t_n))\phi''(t_n)-(\phi'(t_n))^2\right)+O(\tau_n\tau_{n+1}(\tau_n+\tau_{n+1})).
    \end{aligned}
    \]
\end{remark}

\begin{remark}[Integer-order compatibility]\label{remark:lr-integer-compatibility}
For a fixed time grid satisfying $\tau_{k+1}\ge\tau_k$ for $1\le k\le N-1$, and fixed discrete sequences, as $\alpha\to1^-$, $\varpi_{n-k}^{(n,\alpha)}\to\tau_k$, $a_0^{(n,\mathrm{RL})}\to\tau_n$, and $a_{n-k}^{(n,\mathrm{RL})}\to0$ for $k<n$.
Hence $({}^R D_\tau^{1-\alpha}v)^{n+\frac12}\to v^{n+\frac12}$, recovering the integer-order LR scheme. 
Moreover, $\mathcal J_{\tau,P}^{\mathrm{reac},\,m} \to\frac12\sum_{k=1}^m\tau_k\widetilde{\mathcal D}_{\mathrm{reac}}^{k-\frac12}$ and $\mathcal R_{\tau,P}^{\mathrm{reac},\,n+\frac12}\to0$.
Writing $\widetilde E^m:=\widetilde E[\phi^m,\sigma^m,r^{m-\frac12}]$ for $0\le m\le N$, we obtain
\[
 \left(\widetilde{\mathcal E}_{\tau,1}\right)^{m}
 =\widetilde E^m+\frac12\sum_{k=1}^m\tau_k
 \bigl(\widetilde{\mathcal D}^{k-\frac12}
 +\widetilde{\mathcal D}_{\mathrm{reac}}^{k-\frac12}\bigr).
\]
Since $\tau_{n+1}^{-1}\int_{t_n}^{t_{n+1}}\omega_\alpha(s)\,ds \to1$, inequality \eqref{fractional_energy_dissipation} reduces to
\[
 \partial_\tau\widetilde E^{n+\frac12}
 +\widetilde{\mathcal D}_{\mathrm{diff}}^{n+\frac12}
 +\widetilde{\mathcal D}_{\mathrm{reac}}^{n+\frac12}\le0,
 \quad 1\le n\le N-1.
\]
\end{remark}

\subsection{Damped Linear Relaxation Scheme}\label{subsec:energy-compatible-damped-relaxation}
The relaxation update admits an alternating auxiliary-variable error that is not controlled by energy monotonicity.
To examine this behavior, define\begin{equation}\label{eq:weighted-r-error}
e_r^{n+\frac12}:=r^{n+\frac12}-r(\phi^{n+\frac12})
=-\rho_{n+1}e_r^{n-\frac12}+\eta_r^n,
\end{equation}
where $\eta_r^n:=(1+\rho_{n+1})r(\phi^n)-r(\phi^{n+\frac12})-\rho_{n+1}r(\phi^{n-\frac12})$.
Its homogeneous component $(-1)^n(\tau_{n+1}/\tau_1)e_r^{\frac12}$ enters the tumor phase chemical potential through $(1-2\phi^{n+\frac12})e_r^{n+\frac12}$.
Neglect $\eta_r^n$ and let $q$ denote the induced perturbation in the combined phase diffusion and reaction term at the current half level.
By \eqref{eq:RL-direct-kernels}, the two most recent convolution terms contribute $\left(a_0^{(n+1,\mathrm{RL})}-\rho_{n+1}^{-1}a_1^{(n+1,\mathrm{RL})}\right)q$ to the phase increment, 
with the coefficient tending to $1+\rho_{n+1}^{-1}$ as $\alpha\to0^+$ and to $\tau_{n+1}$ as $\alpha\to1^-$.
Thus, at small fractional orders, the alternating perturbation enters the phase increment with a weight that is no longer reduced by the step size, and it is carried into subsequent steps through the convolution history.
Linearizing $\eta_r^n$ with $r'(\phi^n)$ and $r'(\phi^{n\pm\frac12})$ all replaced by $r'(\phi^{n+\frac12})$ yields
\[
\delta\eta_r^n=-\frac14\bigl(1-2\phi^{n+\frac12}\bigr)\tau_{n+1}\bigl(\partial_\tau\delta\phi^{n+\frac12}-\partial_\tau\delta\phi^{n-\frac12}\bigr).
\]
This motivates the following temporal damping of the phase chemical potential, with $\kappa_\phi\ge0$:
\begin{equation}\label{eq:damped-correction}
    \mathcal T_{\phi,d}^{\frac12}:=0,
    \quad \mathcal T_{\phi,d}^{n+\frac12}:=\kappa_\phi\frac{\tau_*^2}{\tau_{n+1}}\left(\partial_\tau\phi^{n+\frac12}-\partial_\tau\phi^{n-\frac12}\right),
    \quad 1\le n\le N-1.
\end{equation}

The corrected phase chemical potential is defined for $0\le n\le N-1$ by
\begin{equation}\label{CapTumorSystem_Relaxation_Discret_Damped}
    \mu_{\phi,d}^{n+\frac12}
    :=-\epsilon_\phi^2\Delta\phi^{n+\frac12}
    +(r^{n+\frac12}-\beta)(1-2\phi^{n+\frac12})
    +\partial_\phi\chi^{n+\frac12}
    +\mathcal T_{\phi,d}^{n+\frac12}.
\end{equation}
The damped linear relaxation (DLR) scheme consists of \eqref{TSD1} and \eqref{TSD4} with $\tilde\mu_\phi$ replaced by $\mu_{\phi,d}$ at all current and history levels,
together with \eqref{CapTumorSystem_Relaxation_Discret_Damped}, \eqref{TSD3}, and \eqref{TSD5}.

Define the damped relaxation energy by
\[
    \widetilde E_d^{n+1}
    :=\widetilde E
    \left[\phi^{n+1},\sigma^{n+1},r^{n+\frac12}\right]
    +\frac{\kappa_\phi\tau_*^2}{2}
    \left\|\partial_\tau\phi^{n+\frac12}\right\|^2.
\]
The corresponding discrete variational energy is
\begin{equation}\label{eq:damped-variational-energy}
    \left(\widetilde{\mathcal E}_{\tau,\alpha,d}\right)^{n+1}
    :=\widetilde E_d^{n+1}
    +\frac12\sum_{k=1}^{n+1}
    \varpi_{n+1-k}^{(n+1,\alpha)}
    \widetilde{\mathcal D}_d^{k-\frac12}
    +\mathcal J_{\tau,P,d}^{\mathrm{reac},\,n+1},
\end{equation}
where
$\widetilde{\mathcal D}_d^{n+\frac12}
:=\widetilde{\mathcal D}_{\mathrm{diff},d}^{n+\frac12}
+\widetilde{\mathcal D}_{\mathrm{reac},d}^{n+\frac12}$.
The quantities
$\widetilde{\mathcal D}_{\mathrm{diff},d}$,
$\widetilde{\mathcal D}_{\mathrm{reac},d}$,
$\mathcal J_{\tau,P,d}^{\mathrm{reac}}$, and
$\mathcal R_{\tau,P,d}^{\mathrm{reac}}$
are defined by replacing $\tilde\mu_\phi$ with $\mu_{\phi,d}$
in \eqref{diffusive_term}, \eqref{eq:lr-reaction-dissipation}, \eqref{eq:lr-reaction-memory}, and \eqref{eq:lr-reaction-remainder}, respectively.

\begin{theorem}[Mass conservation and variational energy dissipation]
\label{thm:damped-energy-dissipation}
    Under periodic or homogeneous Neumann boundary conditions, the DLR scheme satisfies
    \begin{equation}\label{eq:damped-mass-conservation}
        (\phi^n+\sigma^n,1)=(\phi^0+\sigma^0,1),
        \quad 1\le n\le N.
    \end{equation}
    Assume that $\widehat P^{k-\frac12}\ge0$ for $1\le k\le N$.
    For $1\le n\le N-1$, if $\tau_{n+1}\ge\tau_n$, then
    \begin{equation}\label{eq:damped-energy-dissipation}
        \begin{aligned}
        \partial_\tau \left(\widetilde{\mathcal E}_{\tau,\alpha,d}\right)^{n+\frac12}
        &+\frac{1}{2\tau_{n+1}}
        \left(\int_{t_n}^{t_{n+1}}\omega_\alpha(s)\,ds\right)
        \widetilde{\mathcal D}_{\mathrm{diff},d}^{n+\frac12}\\
        & 
        +\frac{\kappa_\phi\tau_*^2}{2\tau_{n+1}}
        \left\|\partial_\tau\phi^{n+\frac12}
        -\partial_\tau\phi^{n-\frac12}\right\|^2
        +\mathcal R_{\tau,P,d}^{\mathrm{reac},\,n+\frac12}
        \le0.
        \end{aligned}
    \end{equation}
\end{theorem}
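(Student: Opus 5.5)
The plan is to follow the proof of Theorem~\ref{thm:1} line by line, with $\tilde\mu_\phi$ replaced by $\mu_{\phi,d}$. The only new ingredient is the damping term $\mathcal T_{\phi,d}^{n+\frac12}$, which appears on the left-hand side after testing.

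\emph{Mass conservation.} Integrate \eqref{TSD1} and \eqref{TSD4}, now written with $\mu_{\phi,d}$, over $\Omega$ and add them. The reaction terms cancel identically whatever potential they contain. The Laplacian terms vanish under periodic or Neumann boundary conditions. Summing in time then gives \eqref{eq:damped-mass-conservation}.

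\emph{Energy law.} Fix $1\le n\le N-1$. Take the $L^2$ inner products of the damped versions of \eqref{TSD1} and \eqref{TSD4} with $\tau_{n+1}\mu_{\phi,d}^{n+\frac12}$ and $\tau_{n+1}\mu_\sigma^{n+\frac12}$, respectively.
\begin{itemize}
\item The right-hand side has exactly the form of \eqref{proof1} with $\mu_{\phi,d}$ at all levels. Lemma~\ref{lem:RL-direct-kernel-positive} handles the diffusive terms and Lemma~\ref{thm:discrete-weighted-RL} (with $\varrho=\widehat P\ge0$, $v=\mu_\sigma-\mu_{\phi,d}$) handles the reaction term. Together they give the analogue of \eqref{ineq_left}: the memory sum of $\widetilde{\mathcal D}_d$, the factor $\int_{t_n}^{t_{n+1}}\omega_\alpha$ multiplying $\widetilde{\mathcal D}_{\mathrm{diff},d}$, $\partial_\tau\mathcal J_{\tau,P,d}^{\mathrm{reac}}$, and $\mathcal R_{\tau,P,d}^{\mathrm{reac}}\ge0$.
\item On the left-hand side, all terms other than $\mathcal T_{\phi,d}$ are treated exactly as in \eqref{proof2}--\eqref{right_dis}, using \eqref{TSD3}. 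They produce $\widetilde E[\phi^{n+1},\sigma^{n+1},r^{n+\frac12}]-\widetilde E[\phi^n,\sigma^n,r^{n-\frac12}]+c_n\|r^{n+\frac12}-r^{n-\frac12}\|^2$, and $c_n\ge0$ because $\tau_{n+1}\ge\tau_n$.
\end{itemize}

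\emph{Damping term.} The extra left-hand-side contribution is
\[
(\mathcal T_{\phi,d}^{n+\frac12},\nabla_\tau\phi^{n+1})=\kappa_\phi\tau_*^2\bigl(\partial_\tau\phi^{n+\frac12}-\partial_\tau\phi^{n-\frac12},\partial_\tau\phi^{n+\frac12}\bigr).
\]
Applying $2(a-b,a)=\|a\|^2-\|b\|^2+\|a-b\|^2$ gives
\[
\frac{\kappa_\phi\tau_*^2}{2}\Bigl(\|\partial_\tau\phi^{n+\frac12}\|^2-\|\partial_\tau\phi^{n-\frac12}\|^2+\|\partial_\tau\phi^{n+\frac12}-\partial_\tau\phi^{n-\frac12}\|^2\Bigr).
\]
The first two pieces are the increment $\widetilde E_d^{n+1}-\widetilde E_d^n$ of the added kinetic-type energy. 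The last piece, after division by $\tau_{n+1}$, is exactly the new dissipation term in \eqref{eq:damped-energy-dissipation}.

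Finally, combine both sides, drop $c_n\|r^{n+\frac12}-r^{n-\frac12}\|^2\ge0$, divide by $\tau_{n+1}$, and collect terms into \eqref{eq:damped-variational-energy}.

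\emph{Main obstacle: bookkeeping at $n=1$ and in the energy definition.}
\begin{itemize}
\item $\widetilde E_d^{n}$ must contain $\frac{\kappa_\phi\tau_*^2}{2}\|\partial_\tau\phi^{n-\frac12}\|^2$. This holds by definition for $n\ge1$, so the increment $\widetilde E_d^{n+1}-\widetilde E_d^n$ is consistent for every $n\ge1$, including $n=1$.
\item The convention $\mathcal T_{\phi,d}^{\frac12}=0$ only affects the first-step potential $\mu_{\phi,d}^{\frac12}$. That potential enters the history sums at later steps, but it is handled uniformly by Lemmas~\ref{lem:RL-direct-kernel-positive} and \ref{thm:discrete-weighted-RL}.
\item I would verify that Lemma~\ref{lem:RL-direct-kernel-positive} is applied with $z_k=\nabla\mu_{\phi,d}^{k-\frac12}$, consistently across the history. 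This is what lets the variational energy absorb the history sum $\frac12\sum_k\varpi^{(n+1,\alpha)}_{n+1-k}\widetilde{\mathcal D}_d^{k-\frac12}$.
\end{itemize}
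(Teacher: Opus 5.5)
Your proposal is correct and follows essentially the same route as the paper's proof. You test the damped equations with $\tau_{n+1}\mu_{\phi,d}^{n+\frac12}$ and $\tau_{n+1}\mu_\sigma^{n+\frac12}$, bound the memory terms via Lemmas~\ref{lem:RL-direct-kernel-positive} and~\ref{thm:discrete-weighted-RL}, reuse \eqref{right_dis}, and split the damping contribution by $2(a-b,a)=\|a\|^2-\|b\|^2+\|a-b\|^2$ into the increment of $\widetilde E_d$ plus the new dissipation term, exactly as the paper does.
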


\begin{proof}
    The proof of mass conservation \eqref{eq:damped-mass-conservation} is the same as in Theorem~\ref{thm:1}, so we omit it here.
    The testing procedure leading to \eqref{proof1} applies with $\tilde\mu_\phi$ replaced by $\mu_{\phi,d}$.
    Combining the identity
    \begin{equation*} 
    \begin{aligned}
    (\mathcal T_{\phi,d}^{n+\frac12},\nabla_\tau\phi^{n+1}) ={}& \frac{\kappa_\phi\tau_*^2}{2}\left(\|\partial_\tau\phi^{n+\frac12}\|^2-\|\partial_\tau\phi^{n-\frac12}\|^2\right)\\
    &\quad+\frac{\kappa_\phi\tau_*^2}{2}\left\|\partial_\tau\phi^{n+\frac12}-\partial_\tau\phi^{n-\frac12}\right\|^2,
    \end{aligned}
    \end{equation*}
    with the calculation in \eqref{right_dis}, we obtain
    \[
    \begin{aligned}
        &(\mu_{\phi,d}^{n+\frac12},\nabla_\tau\phi^{n+1})
        +(\mu_\sigma^{n+\frac12},\nabla_\tau\sigma^{n+1})\\
        &\quad=\widetilde E_d^{n+1}-\widetilde E_d^n+c_n\|r^{n+\frac12}-r^{n-\frac12}\|^2
        +\frac{\kappa_\phi\tau_*^2}{2}
        \left\|\partial_\tau\phi^{n+\frac12}
        -\partial_\tau\phi^{n-\frac12}\right\|^2.
    \end{aligned}
    \]
    Applying Lemma~\ref{lem:RL-direct-kernel-positive} to the two diffusion terms and Lemma~\ref{thm:discrete-weighted-RL} to the reaction term, and using \eqref{eq:damped-variational-energy} together with $\tau_{n+1}\ge\tau_n$, proves \eqref{eq:damped-energy-dissipation}.
\end{proof}

\begin{remark}[Damping consistency and integer-order compatibility]
\label{remark:damped-uniform-compatibility}
For a sufficiently smooth function $\phi$ and time grids satisfying $\tau_n/\tau_{n+1}\le C$ with $C$ independent of the grid, 
substituting $\phi(t_j)$ into \eqref{eq:damped-correction} gives $\mathcal T_{\phi,d}^{n+\frac12}=O(\tau_*^2)$ as $\tau_*\to0$.

As in Remark~\ref{remark:lr-integer-compatibility}, we hold the time grid and discrete sequences fixed and let $\alpha\to1^-$ to recover the integer-order DLR scheme.
The corresponding limiting variational energy is
\[
    \left(\widetilde{\mathcal E}_{\tau,1,d}\right)^{m}
    =\widetilde E_d^m
    +\frac12\sum_{k=1}^m\tau_k
    \bigl(\widetilde{\mathcal D}_d^{k-\frac12}
    +\widetilde{\mathcal D}_{\mathrm{reac},d}^{k-\frac12}\bigr),
    \quad 1\le m\le N.
\]
Taking its discrete time difference and passing to the limit
in \eqref{eq:damped-energy-dissipation} yields
\[
    \partial_\tau\widetilde E_d^{n+\frac12}
    +\widetilde{\mathcal D}_d^{n+\frac12}
    +\frac{\kappa_\phi\tau_*^2}{2\tau_{n+1}}
    \left\|\partial_\tau\phi^{n+\frac12}
    -\partial_\tau\phi^{n-\frac12}\right\|^2
    \le0,
    \quad 1\le n\le N-1.
\]
\end{remark}

\subsection{Damped Modified Linear Relaxation Scheme}
\label{subsec:energy-compatible-damped-modified-relaxation}

The temporal damping introduced in Subsection~\ref{subsec:energy-compatible-damped-relaxation} leaves the auxiliary-variable recurrence \eqref{eq:weighted-r-error} unchanged,
so the homogeneous component $(-1)^n(\tau_{n+1}/\tau_1)e_r^{\frac12}$ remains in the auxiliary-variable error.
To reduce the auxiliary-variable error, we apply an a posteriori correction to the predicted auxiliary variable $\widetilde r^{n+\frac12}$ toward $r(\phi^{n+\frac12})$, subject to the discrete energy dissipation condition.
The correction is performed after the current linear system has been solved and therefore preserves the linear structure of the scheme.

Building on the DLR scheme, the damped modified linear relaxation (DMLR) scheme adds an a posteriori correction to the auxiliary variable and proceeds in two steps for $1\le n\le N-1$.

\emph{Step 1: Linear prediction.}
Solve \eqref{TSD1}, \eqref{TSD4}, and \eqref{TSD5}
with $\tilde\mu_\phi$ replaced by $\mu_{\phi,dm}$ and
\begin{subequations}\label{CapTumorSystem_Relaxation_Discret_Modi}
\begin{align}
    &w_n\widetilde r^{n+\frac12}+(1-w_n)r^{n-\frac12}
    =r(\phi^n),
    \label{DMTSD3}\\
    &\mu_{\phi,dm}^{n+\frac12}
    =-\epsilon_\phi^2\Delta\phi^{n+\frac12}
    +(\widetilde r^{n+\frac12}-\beta)(1-2\phi^{n+\frac12})
    +\partial_\phi\chi^{n+\frac12}
    +\mathcal T_{\phi,d}^{n+\frac12}.
    \label{DMTSD2}
\end{align}
\end{subequations}
Using the corrected value $r^{n-\frac12}$ from the preceding step, the DLR update determines $\widetilde r^{n+\frac12}$ explicitly and then computes $\phi^{n+1}$ and $\sigma^{n+1}$ through a linear solve.
Previously computed history fluxes remain unchanged.

\emph{Step 2: Auxiliary-variable correction.}
After the linear solve, update
\begin{equation}\label{eq:combined-r-correction}
    r^{n+\frac12}
    =\theta^{n+\frac12}\widetilde r^{n+\frac12}
    +(1-\theta^{n+\frac12})R^{n+\frac12},
    \quad R^{n+\frac12}:=r(\phi^{n+\frac12}),
\end{equation}
where $\theta^{n+\frac12}\in[0,1]$.

To examine the effect of the correction, define
\[
    \widetilde e_r^{n+\frac12}:=\widetilde r^{n+\frac12}-R^{n+\frac12}, \quad
    e_r^{n+\frac12}:=r^{n+\frac12}-R^{n+\frac12}.
\]
The correction \eqref{eq:combined-r-correction} gives $e_r^{n+\frac12}=\theta^{n+\frac12}\widetilde e_r^{n+\frac12}$, 
which yields $e_r^{n+\frac12}=-\theta^{n+\frac12}\rho_{n+1}e_r^{n-\frac12}+\theta^{n+\frac12}\eta_r^n$ upon substitution from \eqref{DMTSD3}.
The associated homogeneous recurrence has amplification factor $-\theta^{n+\frac12}\rho_{n+1}$, 
so over several steps its homogeneous component is $(-1)^n(\tau_{n+1}/\tau_1)(\prod_{j=1}^n\theta^{j+\frac12})e_r^{\frac12}$.

To select the correction parameter while preserving discrete energy dissipation,
we first evaluate the energy change induced by the correction.
Replacing $\widetilde r^{n+\frac12}$ with $r^{n+\frac12}(\theta)$ according to \eqref{eq:combined-r-correction} changes the relaxation energy by
\[
\Delta\widetilde E^{n+1}(\theta)=\|\widetilde r^{n+\frac12}\|^2-\|r^{n+\frac12}(\theta)\|^2-2\bigl(\widetilde r^{n+\frac12}-r^{n+\frac12}(\theta),r(\phi^{n+1})\bigr).
\]
The quantities $\widetilde{\mathcal D}_{\mathrm{diff},dm}$, $\widetilde{\mathcal D}_{\mathrm{reac},dm}$, $\widetilde{\mathcal D}_{dm}$, $\mathcal J_{\tau,P,dm}^{\mathrm{reac}}$, and $\mathcal R_{\tau,P,dm}^{\mathrm{reac}}$ are defined as in the DLR scheme,
with $\mu_{\phi,d}$ replaced by $\mu_{\phi,dm}$.
These quantities remain unchanged during the correction because the chemical potentials and stored diffusion and reaction terms are not recomputed.
Define  
\begin{equation}\label{eq:combined-correction-budget}
\begin{aligned}
Q^{n+\frac12}&:=\frac{\kappa_\phi\tau_*^2}{2}\|\partial_\tau\phi^{n+\frac12}-\partial_\tau\phi^{n-\frac12}\|^2+\tau_{n+1}\mathcal R_{\tau,P,dm}^{\mathrm{reac},\,n+\frac12}\\
&\quad+\frac12\left(\int_{t_n}^{t_{n+1}}\omega_\alpha(s)\,ds\right)\widetilde{\mathcal D}_{\mathrm{diff},dm}^{n+\frac12}+c_n\|\widetilde r^{n+\frac12}-r^{n-\frac12}\|^2.
\end{aligned}
\end{equation}
Define the admissible set
\begin{equation}\label{eq:combined-admissible-set}
\mathcal V_{dm}^n:=\{\theta\in[0,1]:\Delta\widetilde E^{n+1}(\theta)\le\lambda Q^{n+\frac12}\}, \quad
\theta^{n+\frac12}:=\min\mathcal V_{dm}^n,
\end{equation}
where $\lambda\in[0,1)$.
Since $\tau_{n+1}\ge\tau_n$ gives $c_n\ge0$, we have $Q^{n+\frac12}\ge0$; together with $\Delta\widetilde E^{n+1}(1)=0$, this shows that $\theta=1$ is admissible, and continuity makes $\mathcal V_{dm}^n$ a nonempty closed subset of $[0,1]$.

To state the energy dissipation law under this correction, define the DMLR discrete variational energy by
\begin{equation}\label{eq:modified-variational-energy}
\left(\widetilde{\mathcal E}_{\tau,\alpha,dm}\right)^{n+1}:=\widetilde E_{dm}^{n+1}+\frac12\sum_{k=1}^{n+1}\varpi_{n+1-k}^{(n+1,\alpha)}\widetilde{\mathcal D}_{dm}^{k-\frac12}+\mathcal J_{\tau,P,dm}^{\mathrm{reac},\,n+1},
\end{equation}
where the relaxation energy is evaluated using the corrected auxiliary variable:
\[
\widetilde E_{dm}^{n+1}:=\widetilde E\left[\phi^{n+1},\sigma^{n+1},r^{n+\frac12}\right]+\frac{\kappa_\phi\tau_*^2}{2}\left\|\partial_\tau\phi^{n+\frac12}\right\|^2.
\]
The DMLR scheme satisfies the following conservation and dissipation properties.

\begin{theorem}[Mass conservation and variational energy dissipation]\label{thm:3}
\leavevmode\par\nobreak\noindent
Let $\lambda\in[0,1)$.
Under periodic or homogeneous Neumann boundary conditions, the DMLR scheme \eqref{CapTumorSystem_Relaxation_Discret_Modi} with the correction rule \eqref{eq:combined-admissible-set} satisfies
\begin{equation}\label{eq:dmlr-mass-conservation}
(\phi^n+\sigma^n,1)=(\phi^0+\sigma^0,1),
\quad 1\le n\le N.
\end{equation}
Assume that $\widehat P^{k-\frac12}\ge0$ for $1\le k\le N$.
For $1\le n\le N-1$, if $\tau_{n+1}\ge\tau_n$, then
\begin{equation}\label{Modified_fractional_energy_dissipation_relaxation}
\begin{aligned}
&\partial_\tau\left(\widetilde{\mathcal E}_{\tau,\alpha,dm}\right)^{n+\frac12}+\frac{1-\lambda}{2\tau_{n+1}}\left(\int_{t_n}^{t_{n+1}}\omega_\alpha(s)\,ds\right)\widetilde{\mathcal D}_{\mathrm{diff},dm}^{n+\frac12}\\
&\quad+\frac{(1-\lambda)\kappa_\phi\tau_*^2}{2\tau_{n+1}}\left\|\partial_\tau\phi^{n+\frac12}-\partial_\tau\phi^{n-\frac12}\right\|^2+(1-\lambda)\mathcal R_{\tau,P,dm}^{\mathrm{reac},\,n+\frac12}\le0.
\end{aligned}
\end{equation}
\end{theorem}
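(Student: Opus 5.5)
Mass conservation \eqref{eq:dmlr-mass-conservation} follows exactly as in Theorem~\ref{thm:1}. The correction step \eqref{eq:combined-r-correction} only changes $r^{n+\frac12}$ after $\phi^{n+1},\sigma^{n+1}$ have been computed. Integrating \eqref{TSD1} and \eqref{TSD4} (with $\mu_{\phi,dm}$) over $\Omega$ and adding, the reaction terms cancel and the Laplacian terms vanish by the boundary conditions. Summing over the steps gives the claim.

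For the dissipation law I would first run the DLR argument of Theorem~\ref{thm:damped-energy-dissipation} on the \emph{predicted} state, i.e.\ with $\widetilde r^{n+\frac12}$ in place of $r^{n+\frac12}$. The previous step supplies the corrected value $r^{n-\frac12}$, and \eqref{DMTSD3} is exactly \eqref{TSD3} with $\widetilde r^{n+\frac12}$. Hence the computation \eqref{proof2}--\eqref{right_dis}, together with the damping identity, gives
\[
(\mu_{\phi,dm}^{n+\frac12},\nabla_\tau\phi^{n+1})+(\mu_\sigma^{n+\frac12},\nabla_\tau\sigma^{n+1})
=\widetilde E^{n+1}_{\mathrm{pred}}-\widetilde E_{dm}^n+c_n\|\widetilde r^{n+\frac12}-r^{n-\frac12}\|^2+\frac{\kappa_\phi\tau_*^2}{2}\|\partial_\tau\phi^{n+\frac12}-\partial_\tau\phi^{n-\frac12}\|^2.
\]
Here $\widetilde E^{n+1}_{\mathrm{pred}}:=\widetilde E[\phi^{n+1},\sigma^{n+1},\widetilde r^{n+\frac12}]+\frac{\kappa_\phi\tau_*^2}{2}\|\partial_\tau\phi^{n+\frac12}\|^2$. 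The important point is that $\widetilde E_{dm}^n$ is evaluated at the corrected $r^{n-\frac12}$, which is consistent with the definition \eqref{eq:modified-variational-energy} at the previous level.

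On the right-hand side of \eqref{proof1}, I would apply Lemma~\ref{lem:RL-direct-kernel-positive} to the two diffusion terms and Lemma~\ref{thm:discrete-weighted-RL} to the reaction term, as in \eqref{ineq_left}. These history and reaction quantities involve only $\mu_{\phi,dm}$ and $\mu_\sigma$, which the correction does not alter. Combining the two sides gives
\[
\widetilde E^{n+1}_{\mathrm{pred}}-\widetilde E_{dm}^n+\frac12\bigl[(I_\tau^\alpha\widetilde{\mathcal D}_{dm})^{n+1}-(I_\tau^\alpha\widetilde{\mathcal D}_{dm})^{n}\bigr]+\mathcal J_{\tau,P,dm}^{\mathrm{reac},n+1}-\mathcal J_{\tau,P,dm}^{\mathrm{reac},n}+Q^{n+\frac12}\le0,
\]
with $Q^{n+\frac12}$ as in \eqref{eq:combined-correction-budget}.

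Next I would pass from the predicted energy to the corrected one. Expanding \eqref{dismodienergy}, the only $r$-dependent part is $\int_\Omega[2(r-\beta)r(\phi^{n+1})-r^2]$. Therefore
\[
\widetilde E_{dm}^{n+1}-\widetilde E^{n+1}_{\mathrm{pred}}=\|\widetilde r^{n+\frac12}\|^2-\|r^{n+\frac12}\|^2-2(\widetilde r^{n+\frac12}-r^{n+\frac12},r(\phi^{n+1}))=\Delta\widetilde E^{n+1}(\theta^{n+\frac12}).
\]
The $\beta$ terms cancel in this difference; I would verify that explicitly, since it is the one place where bookkeeping could go wrong. The admissible-set rule \eqref{eq:combined-admissible-set} gives $\Delta\widetilde E^{n+1}(\theta^{n+\frac12})\le\lambda Q^{n+\frac12}$; the set is nonempty and closed, so the minimum exists and belongs to it. Substituting yields the inequality above with $Q^{n+\frac12}$ replaced by $(1-\lambda)Q^{n+\frac12}$.

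Finally, $c_n\ge0$ because $\tau_{n+1}\ge\tau_n$, so I would drop the nonnegative term $(1-\lambda)c_n\|\widetilde r^{n+\frac12}-r^{n-\frac12}\|^2$. Dividing by $\tau_{n+1}$ gives \eqref{Modified_fractional_energy_dissipation_relaxation}. I expect the main obstacle to be the consistent treatment of the two values of $r$ at each half level. The predicted $\widetilde r^{n+\frac12}$ enters the chemical potential and the relaxation identity at step $n$. The corrected $r^{n+\frac12}$ enters the energy at level $n+1$ and the recurrence \eqref{DMTSD3} at step $n+1$. The telescoping works only because the identity \eqref{proof2} uses the stored corrected $r^{n-\frac12}$ on its "old" side, so I would check that identity carefully with $(a,b)=(\widetilde r^{n+\frac12},r^{n-\frac12})$.
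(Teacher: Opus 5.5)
Your proposal is correct and follows the paper's proof essentially step for step. You run the DLR estimate on the predictor with the pair $(\widetilde r^{n+\frac12},r^{n-\frac12})$, absorb the correction through $\widetilde E_{dm}^{n+1}-\widetilde E^{n+1}_{\mathrm{pred}}=\Delta\widetilde E^{n+1}(\theta^{n+\frac12})\le\lambda Q^{n+\frac12}$, then drop the $(1-\lambda)c_n$ term and divide by $\tau_{n+1}$. Your explicit checks are exactly the bookkeeping the paper leaves implicit: that the $\beta$-terms cancel in the energy difference, and that the old side of \eqref{proof2} uses the stored corrected $r^{n-\frac12}$.
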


\begin{proof}
    Since the correction leaves $\phi^{n+1}$ and $\sigma^{n+1}$ unchanged, the DMLR scheme preserves total mass by the same argument as for the DLR scheme.

    For the energy estimate, repeat the derivation of \eqref{eq:damped-energy-dissipation} for the predictor,
    using $\widetilde r^{n+\frac12}$ at the new half time level and the corrected value $r^{n-\frac12}$ at the preceding one.
    The correction changes the relaxation energy by $\Delta\widetilde E^{n+1}(\theta^{n+\frac12})$,
    while leaving the damping contribution and all dissipation and history terms unchanged.
    Retaining the term $c_n\|\widetilde r^{n+\frac12}-r^{n-\frac12}\|^2$ in the predictor estimate and using \eqref{eq:combined-correction-budget} and \eqref{eq:modified-variational-energy} yields
    \[
    \left(\widetilde{\mathcal E}_{\tau,\alpha,dm}\right)^{n+1}-\left(\widetilde{\mathcal E}_{\tau,\alpha,dm}\right)^n+Q^{n+\frac12}
    \le\Delta\widetilde E^{n+1}(\theta^{n+\frac12})
    \le\lambda Q^{n+\frac12},
    \]
    where the last inequality follows from \eqref{eq:combined-admissible-set}.
    Rearranging, substituting \eqref{eq:combined-correction-budget}, dropping $(1-\lambda)c_n\|\widetilde r^{n+\frac12}-r^{n-\frac12}\|^2\ge0$, and dividing by $\tau_{n+1}$ proves \eqref{Modified_fractional_energy_dissipation_relaxation}.
\end{proof}

\begin{remark}[Selection of the correction parameter] \label{remark:theta}
    Let $d^{n+\frac12}:=\widetilde r^{n+\frac12}-R^{n+\frac12}$.
    For $\theta\in[0,1]$, the energy constraint in \eqref{eq:combined-admissible-set} is equivalent to
    \[
    p_n(\theta):=A_n\theta^2+B_n\theta+C_n\le0,
    \]
    where
    \begin{equation}\label{eq:combined-theta-coefficients}
        \begin{aligned}
            A_n&=-\|d^{n+\frac12}\|^2,\quad
            B_n =2(d^{n+\frac12},r(\phi^{n+1})-R^{n+\frac12}),\\
            C_n&=\|\widetilde r^{n+\frac12}\|^2-\|R^{n+\frac12}\|^2-2(d^{n+\frac12},r(\phi^{n+1}))-\lambda Q^{n+\frac12}.
        \end{aligned}
    \end{equation}
    If $A_n=0$, then $d^{n+\frac12}=0$ and $p_n(\theta)=-\lambda Q^{n+\frac12}\le0$,
    so the minimum admissible value is $\theta=0$.
    If $A_n<0$ and $C_n\le0$,
    then $p_n(0)=C_n\le0$,
    so the minimum admissible value is again $\theta=0$.
    Otherwise, $A_n<0$ and $C_n>0$,
    so the two roots have opposite signs.
    Since $p_n(1)=-\lambda Q^{n+\frac12}\le0$,
    the positive root lies in $(0,1]$ and is the minimum admissible value.
    Therefore,
    \begin{equation}\label{choice_theta}
        \theta^{n+\frac12}=
        \begin{cases}
            0, & A_n=0\ \text{or}\ C_n\le0,\\[2mm]
            \displaystyle\frac{-B_n-\sqrt{B_n^2-4A_nC_n}}{2A_n}, & A_n<0\ \text{and}\ C_n>0.
        \end{cases}
    \end{equation}
\end{remark} 

\begin{remark}[Integer-order compatibility]\label{rem:compatibility_DMLR}
As stated in Remark~\ref{remark:lr-integer-compatibility}, the discrete history operators recover their integer-order limits as $\alpha\to1^-$ on a fixed time grid with fixed discrete sequences.
These limits yield the integer-order DMLR predictor.
The auxiliary correction retains its form, with $Q^{n+\frac12}$ in \eqref{eq:combined-admissible-set} replaced by its limit as $\alpha\to1^-$:
\[ 
Q_1^{n+\frac12} :=\frac{\kappa_\phi\tau_*^2}{2}\|\partial_\tau\phi^{n+\frac12}-\partial_\tau\phi^{n-\frac12}\|^2 +\frac{\tau_{n+1}}2\widetilde{\mathcal D}_{\mathrm{diff},dm}^{n+\frac12}+c_n\|\widetilde r^{n+\frac12}-r^{n-\frac12}\|^2.
\]
The limiting variational energy is
\[
 \left(\widetilde{\mathcal E}_{\tau,1,dm}\right)^{m}
 =\widetilde E_{dm}^m+\frac12\sum_{k=1}^m\tau_k
 (\widetilde{\mathcal D}_{dm}^{k-\frac12}
 +\widetilde{\mathcal D}_{\mathrm{reac},dm}^{k-\frac12}).
\]
Hence \eqref{Modified_fractional_energy_dissipation_relaxation} reduces to 
 \begin{equation*}
\begin{aligned}
\frac{\widetilde E_{dm}^{n+1}-\widetilde E_{dm}^n}{\tau_{n+1}}&+\left(1-\frac\lambda2\right)\widetilde{\mathcal D}_{\mathrm{diff},dm}^{n+\frac12}+\widetilde{\mathcal D}_{\mathrm{reac},dm}^{n+\frac12}\\
&\quad+\frac{(1-\lambda)\kappa_\phi\tau_*^2}{2\tau_{n+1}}\|\partial_\tau\phi^{n+\frac12}-\partial_\tau\phi^{n-\frac12}\|^2\le0,\quad 1\le n\le N-1.
\end{aligned}
 \end{equation*} 
Thus, on a fixed time grid with fixed discrete sequences, the DMLR predictor, correction budget, and energy inequality recover their integer-order counterparts.
\end{remark}

\section{Numerical Experiments}\label{sec:example}
In this section, we first present the temporal convergence of the three schemes and then examine variational energy dissipation and mass conservation in long-term tumor growth simulations.
We also compare model simulations with experimental data on in vitro tumor spheroid growth and investigate tumor growth under sustained nutrient supply.
The parameter $\beta=2$ is used throughout. Unless otherwise specified, the DMLR scheme is used for the numerical simulations.
The three schemes are summarized in Algorithm~\ref{algrtm}.

\begin{algorithm}
\caption{Unified time stepping for the LR, DLR, and DMLR schemes}
\label{algrtm}
\begin{algorithmic}[1]
\State \textbf{Input:}
Set scheme $\mathsf S\in\{\mathrm{LR},\mathrm{DLR},\mathrm{DMLR}\}$; time grid and model parameters; fixed damping scale $\tau_*$; initial values $\phi^0,\sigma^0$.
\State \textbf{Output:}
$\{\phi^n,\sigma^n\}_{n=1}^{N}$ and $\{r^{n+\frac12}\}_{n=0}^{N-1}$.
\For{$n=0,\ldots,N-1$}
    \If{$n=0$}
        \State Compute $\phi_*^1$ from \eqref{TSD1}, \eqref{TSD4}, and \eqref{TSD5} with $\widehat P^{\frac12}=P(\phi^0)$ and $r^{\frac12}=r(\phi^0)$.
        \State Set $\widehat\phi^{\frac12}=(\phi^0+\phi_*^1)/2$, $\widehat P^{\frac12}=P(\widehat\phi^{\frac12})$;
        use $r(\widehat\phi^{\frac12})$ for $r^{\frac12}$ (LR/DLR) or $\widetilde r^{\frac12}$ (DMLR).
    \Else
        \State Compute $\widehat\phi^{n+\frac12}$ and $\widehat P^{n+\frac12}$ by \eqref{eq:original-extrapolation}.
        \If{$\mathsf S=\mathrm{DMLR}$}
            \State Compute $\widetilde r^{n+\frac12}$ by \eqref{DMTSD3}.
        \Else
            \State Compute $r^{n+\frac12}$ by \eqref{TSD3}.
        \EndIf
    \EndIf
    \State Solve \eqref{TSD1}, \eqref{TSD4}, and \eqref{TSD5} linearly for $\phi^{n+1}$ and $\sigma^{n+1}$,
    with the tumor phase chemical potential defined by \eqref{TSD2} for LR, \eqref{CapTumorSystem_Relaxation_Discret_Damped} for DLR, and \eqref{DMTSD2} for DMLR.
    \If{$\mathsf S=\mathrm{DMLR}$}
        \State Set $R^{n+\frac12}=r(\phi^{n+\frac12})$, and compute $Q^{n+\frac12}$ by \eqref{eq:combined-correction-budget} for $n\ge1$ or set $Q^{\frac12}=0$.
        \State Choose $\theta^{n+\frac12}$ by \eqref{choice_theta} and update $r^{n+\frac12}$ by \eqref{eq:combined-r-correction}.
    \EndIf
\EndFor
\end{algorithmic}
\end{algorithm}

\begin{example}[Error and convergence comparison]\label{example1}
Suitable source terms are added to \eqref{eq:subdiffusion-tumor} to admit the exact solution
\begin{equation*}
\phi(x,y,t)=\omega_{1+v}(t)\frac{\sin(2\pi x)\sin(2\pi y)}4,\quad \sigma(x,y,t)=1-\phi(x,y,t),
\end{equation*}
on the unit periodic square with $T=1$ and $v=1$.
Parameters are chosen as $\epsilon_\phi=0.05$, $\delta=0.3$, $P_0=0.05$, $\chi_0=0.02$, $M_\phi=10^{-3}$, and $M_\sigma=10^{-1}$,
with the constant proliferation coefficient $P(\phi)\equiv P_0$.
We use the time grid $t_n=(n/N_t)^\gamma$,
with $N_t=8,16,32,64,128$,
on a $256\times256$ spatial grid.
Both damped schemes use $\kappa_\phi=4$ throughout this example.
For DMLR, the budget fraction is $\lambda=0.01$ in Figure~\ref{fig:example1-grading} and $\lambda=0.99$ in the remaining computations of this example.
Define $E_\phi=\max_{1\le n\le N_t}\max_{i,j}|\phi_{ij}^{n}-\phi(x_i,y_j,t_n)|$, and define $E_\sigma$ and $E_r$ analogously, using half time levels for $r$.
The observed convergence order is computed as $\log_2(E(N_t)/E(2N_t))$.
\end{example}

\begin{figure}[!htbp]
\centering
\includegraphics[width=.96\textwidth]{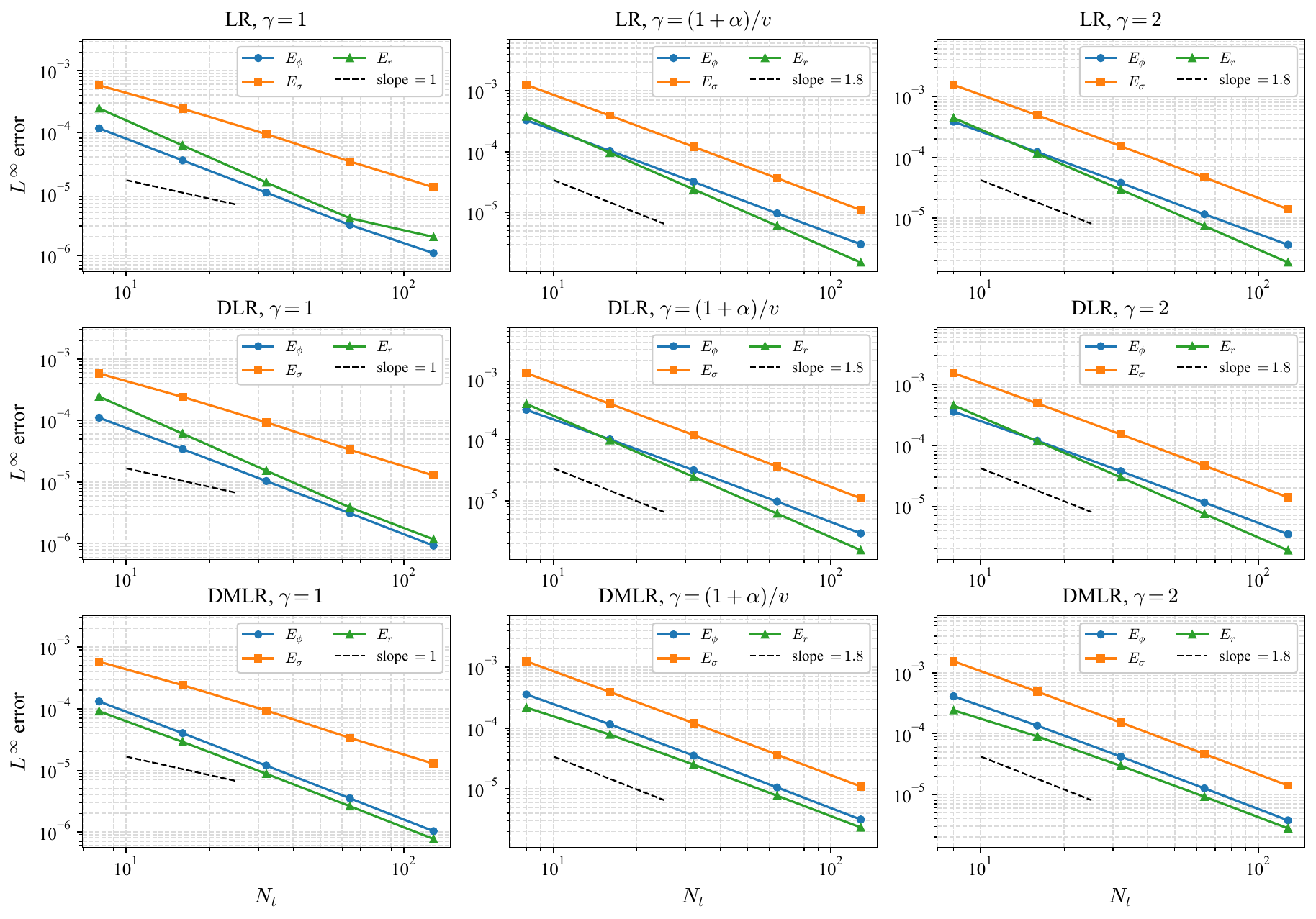}
\caption{Temporal $L^\infty$ errors and convergence rates of LR, DLR, and DMLR for $(\alpha,v)=(0.8,1)$.}\label{fig:example1-grading}
\end{figure}

Figure~\ref{fig:example1-grading} compares the temporal convergence of the LR, DLR, and DMLR schemes at $\alpha=0.8$.
The observed convergence rates for $\phi$, $\sigma$, and $r$ are consistent with the expected order $\min\{1+\alpha,\gamma v\}$.
All three schemes yield comparable errors in $\phi$ and $\sigma$.
DMLR produces smaller auxiliary-variable errors than LR and DLR for $\gamma=1$, whereas this advantage is largely restricted to coarse grids for $\gamma=(1+\alpha)/v$ and $\gamma=2$.

\begin{table}[!htbp]
\centering
\caption{Temporal $L^\infty$ errors and convergence rates of the LR scheme for $\alpha=0.1,0.2,0.3$ and $\gamma=2$.}\label{tab:weighted-e1-small-lr}
\scriptsize
\setlength{\tabcolsep}{3.2pt}
\renewcommand{\arraystretch}{1.08}
\begin{tabular}{cc rr rr rr l}
\toprule
$\alpha$ & $N_t$ & \multicolumn{2}{c}{$\phi$} & \multicolumn{2}{c}{$\sigma$} & \multicolumn{2}{c}{$r$} & Status\\
\cmidrule(lr){3-4}\cmidrule(lr){5-6}\cmidrule(lr){7-8}
&& Error & Order & Error & Order & Error & Order &\\
\midrule
\multirow{5}{*}{0.1}
& 8 & 5.01e-03 & -- & 2.27e-02 & -- & 2.52e-03 & -- & finite\\
& 16 & 3.06e-03 & 0.71 & 1.06e-02 & 1.09 & 2.70e-03 & -0.10 & finite\\
& 32 & 4.93e-02 & -4.01 & 4.94e-03 & 1.11 & 7.55e-02 & -4.81 & unstable\\
& 64 & \texttt{NaN} & -- & \texttt{NaN} & -- & \texttt{NaN} & -- & diverged\\
& 128 & \texttt{NaN} & -- & \texttt{NaN} & -- & \texttt{NaN} & -- & diverged\\
\midrule
\multirow{5}{*}{0.2}
& 8 & 3.71e-03 & -- & 1.71e-02 & -- & 1.70e-03 & -- & finite\\
& 16 & 1.83e-03 & 1.02 & 7.47e-03 & 1.19 & 1.42e-03 & 0.26 & finite\\
& 32 & 3.51e-03 & -0.94 & 3.30e-03 & 1.18 & 5.30e-03 & -1.90 & unstable\\
& 64 & \texttt{NaN} & -- & \texttt{NaN} & -- & \texttt{NaN} & -- & diverged\\
& 128 & \texttt{NaN} & -- & \texttt{NaN} & -- & \texttt{NaN} & -- & diverged\\
\midrule
\multirow{5}{*}{0.3}
& 8 & 2.73e-03 & -- & 1.26e-02 & -- & 1.11e-03 & -- & finite\\
& 16 & 1.20e-03 & 1.19 & 5.17e-03 & 1.29 & 8.01e-04 & 0.46 & finite\\
& 32 & 6.78e-04 & 0.82 & 2.14e-03 & 1.27 & 7.28e-04 & 0.14 & finite\\
& 64 & 6.98e-02 & -6.69 & 1.05e-03 & 1.03 & 1.33e-01 & -7.51 & unstable\\
& 128 & \texttt{NaN} & -- & \texttt{NaN} & -- & \texttt{NaN} & -- & diverged\\
\bottomrule
\end{tabular}
\par\smallskip
\begin{minipage}{\linewidth}
\footnotesize
\textit{Note.} \texttt{NaN} indicates solver nonconvergence or nonfinite values.
\end{minipage}
\end{table}

\begin{figure}[!htbp]
\centering
\includegraphics[width=.96\textwidth]{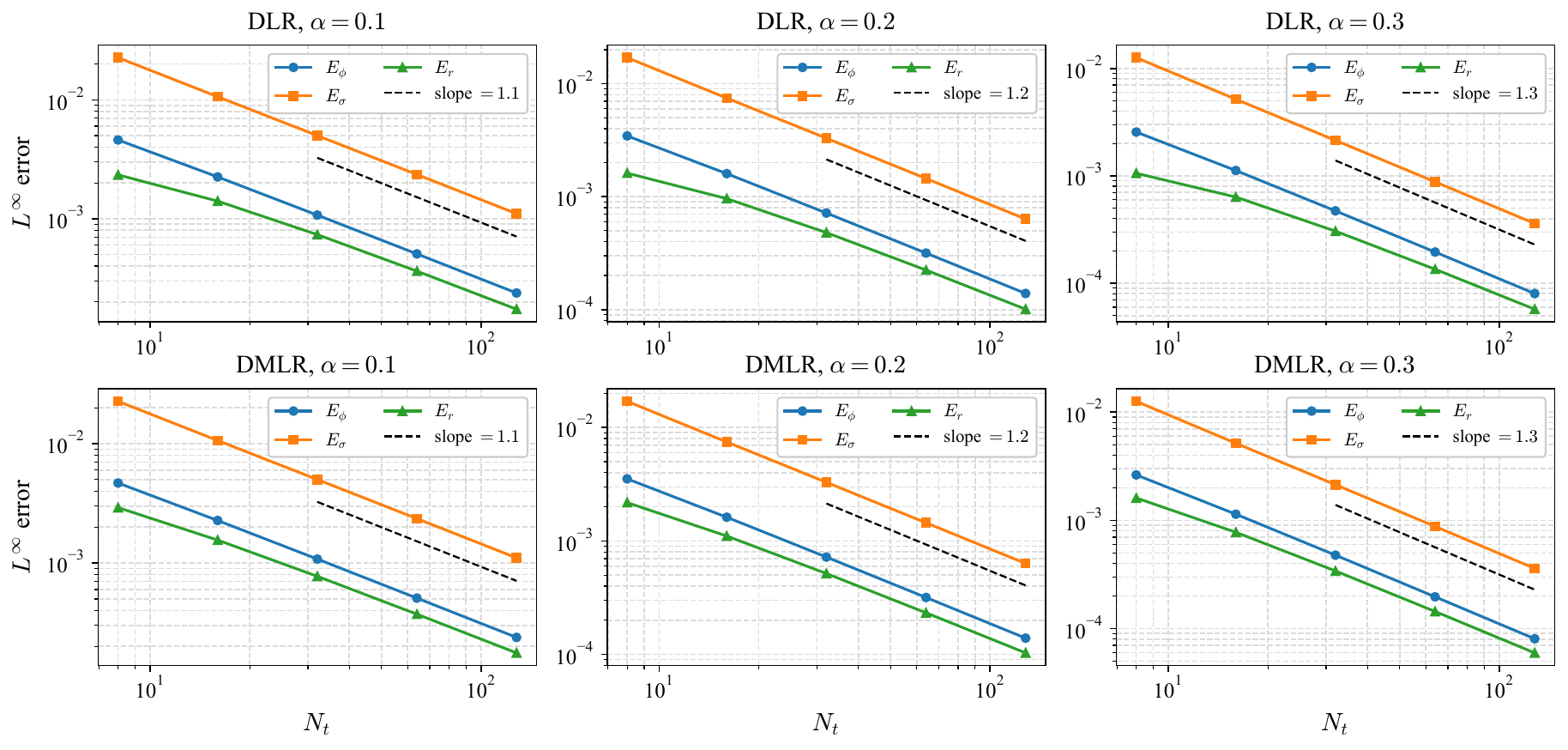}
\caption{Temporal $L^\infty$ errors and convergence rates of the DLR and DMLR schemes for $\alpha=0.1,0.2,0.3$ and $\gamma=2$.}\label{fig:example1-small}
\end{figure}

\begin{figure}[!htbp]
\centering
\begin{subfigure}[t]{0.32\textwidth}
\centering
\includegraphics[width=\linewidth]{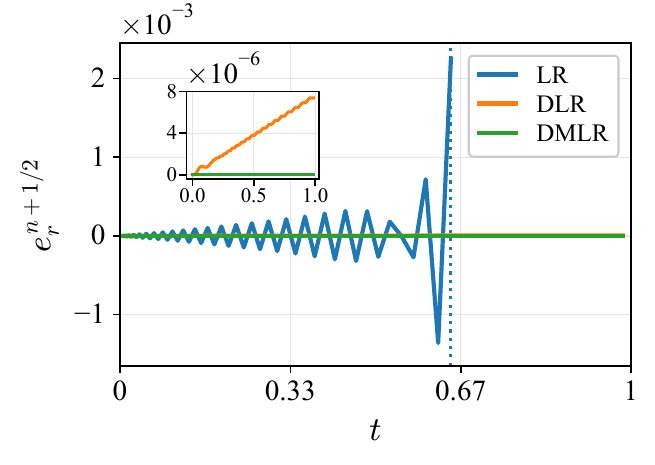}
\caption{}\label{fig:example1-mechanism-auxiliary}
\end{subfigure}
\hfill
\begin{subfigure}[t]{0.32\textwidth}
\centering
\includegraphics[width=\linewidth]{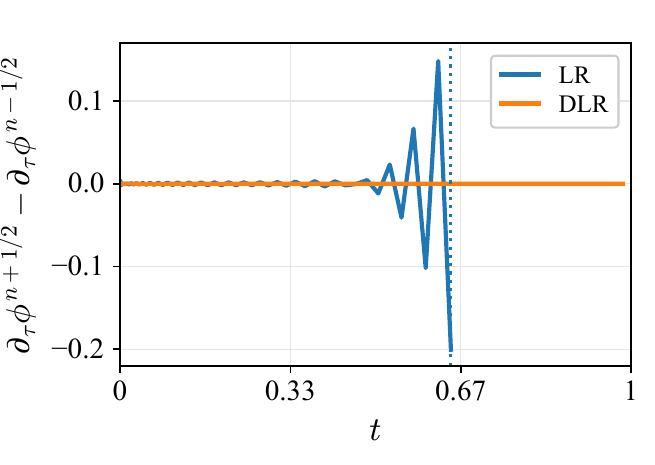}
\caption{}\label{fig:example1-mechanism-response}
\end{subfigure}
\hfill
\begin{subfigure}[t]{0.32\textwidth}
\centering
\includegraphics[width=\linewidth]{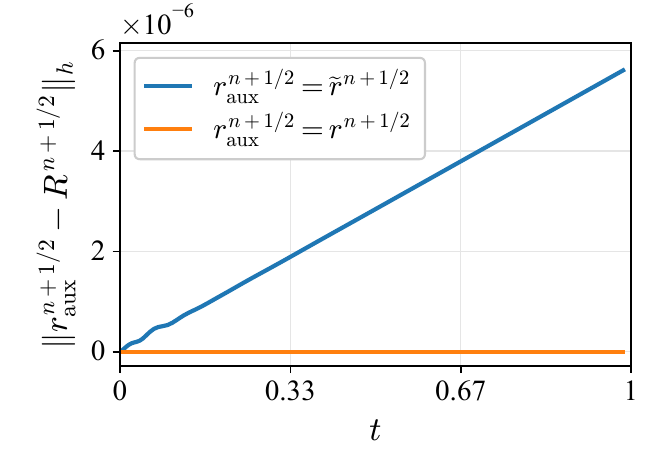}
\caption{}\label{fig:example1-mechanism-correction}
\end{subfigure}
\caption{Temporal damping and auxiliary-variable correction for $\alpha=0.1$, $\gamma=2$, and $N_t=64$:
(a) auxiliary-variable deviation;
(b) differences of consecutive discrete phase-field time derivatives;
(c) DMLR deviations from the same target $R^{n+\frac12}$ before ($r_{\mathrm{aux}}^{n+\frac12}=\widetilde r^{n+\frac12}$) and after ($r_{\mathrm{aux}}^{n+\frac12}=r^{n+\frac12}$) correction.}
\label{fig:example1-mechanisms}
\end{figure}

Table~\ref{tab:weighted-e1-small-lr} and Figure~\ref{fig:example1-small} compare the $L^\infty$ errors and convergence rates of these three schemes for small $\alpha$ values $0.1,0.2,0.3$ and $\gamma=2$.
The LR scheme exhibits error growth in $\phi$ and $r$ and eventual computational breakdown as the number of time steps increases.
DLR recovers the expected convergence rates for $\phi$ and $\sigma$, and the finest-grid orders for $r$ are $1.06$, $1.15$, and $1.24$, respectively.
With the additional relaxation correction, DMLR achieves convergence rates consistent with the predicted orders $1+\alpha$ for all three variables.
Figure~\ref{fig:example1-mechanisms} illustrates the auxiliary deviations and phase response for $\alpha=0.1$ and $N_t=64$.
Pointwise quantities are evaluated at $(1/4,1/4)$, with the auxiliary deviation defined by \eqref{eq:weighted-r-error}.
LR fails at step 53; dotted vertical lines indicate its last accepted half time level.
Subfigures~\ref{fig:example1-mechanisms}(\subref{fig:example1-mechanism-auxiliary},\subref{fig:example1-mechanism-response}) show alternating auxiliary deviations and oscillatory time-derivative differences in LR.
DLR reduces the maximum absolute time-derivative difference over $1\le n\le51$ from $2.01\times10^{-1}$ to $2.74\times10^{-4}$, consistent with temporal damping limiting the propagation of auxiliary deviations to the phase variable.
Subfigure~\ref{fig:example1-mechanisms}(\subref{fig:example1-mechanism-correction}) shows that DMLR selects $\theta^{n+\frac12}=0$ at every post-startup step, eliminating the auxiliary deviation relative to the numerical midpoint target.

\begin{example}[Tumor growth under a heterogeneous nutrient distribution]\label{example2}
\leavevmode\par\nobreak\noindent
We consider the tumor--nutrient model on $ [-\pi/2,\pi/2]^2$ with periodic boundary conditions and the following initial data:
\begin{equation*}
    \begin{aligned}
        &\phi_0(x,y)={}\frac12\Bigg[3+\tanh\!\left(\frac{0.12-\sqrt{{\left(\frac{x+0.3}{1.7}\right)}^{2}+{\left(\frac{y-0.3}{1.2}\right)}^{2}}}{\sqrt2\,\epsilon_\phi}\right)\\
        &\quad+\tanh\!\left(\frac{0.15-\sqrt{{\left(\frac{x-0.3}{1.5}\right)}^{2}+{\left(\frac{y-0.25}{1.5}\right)}^{2}}}{\sqrt2\,\epsilon_\phi}\right)+\tanh\!\left(\frac{0.1-\sqrt{{\left(\frac{x}{2}\right)}^{2}+{\left(\frac{y+0.4}{2.5}\right)}^{2}}}{\sqrt2\,\epsilon_\phi}\right)\Bigg],\\
        &\sigma_0(x,y)={}1-0.9\phi_0(x,y).
    \end{aligned}
\end{equation*}
In the implementation, the prescribed $\phi_0$ is first clipped to $[0,1]$, and values below $0.05$ are set to zero before evaluating $\sigma_0$.
Parameters are chosen as $\epsilon_\phi=0.02$, $\delta=0.5$, $P_0=0.8$, $\chi_0=0.01$, $M_\phi=M_\sigma=10^{-3}$,  $\kappa_\phi=1$, and $\lambda=0.95$ on a $256\times256$ Fourier grid.
We simulate up to $T=30$ using 340 nondecreasing time steps,
with $\tau_1=0.06$ and $\tau_*=0.09375$.
The proliferation rate is given by $P(\phi)=P_0\delta\phi^2$.
\end{example}

\begin{figure}[!htbp]
\centering
\includegraphics[width=.94\textwidth]{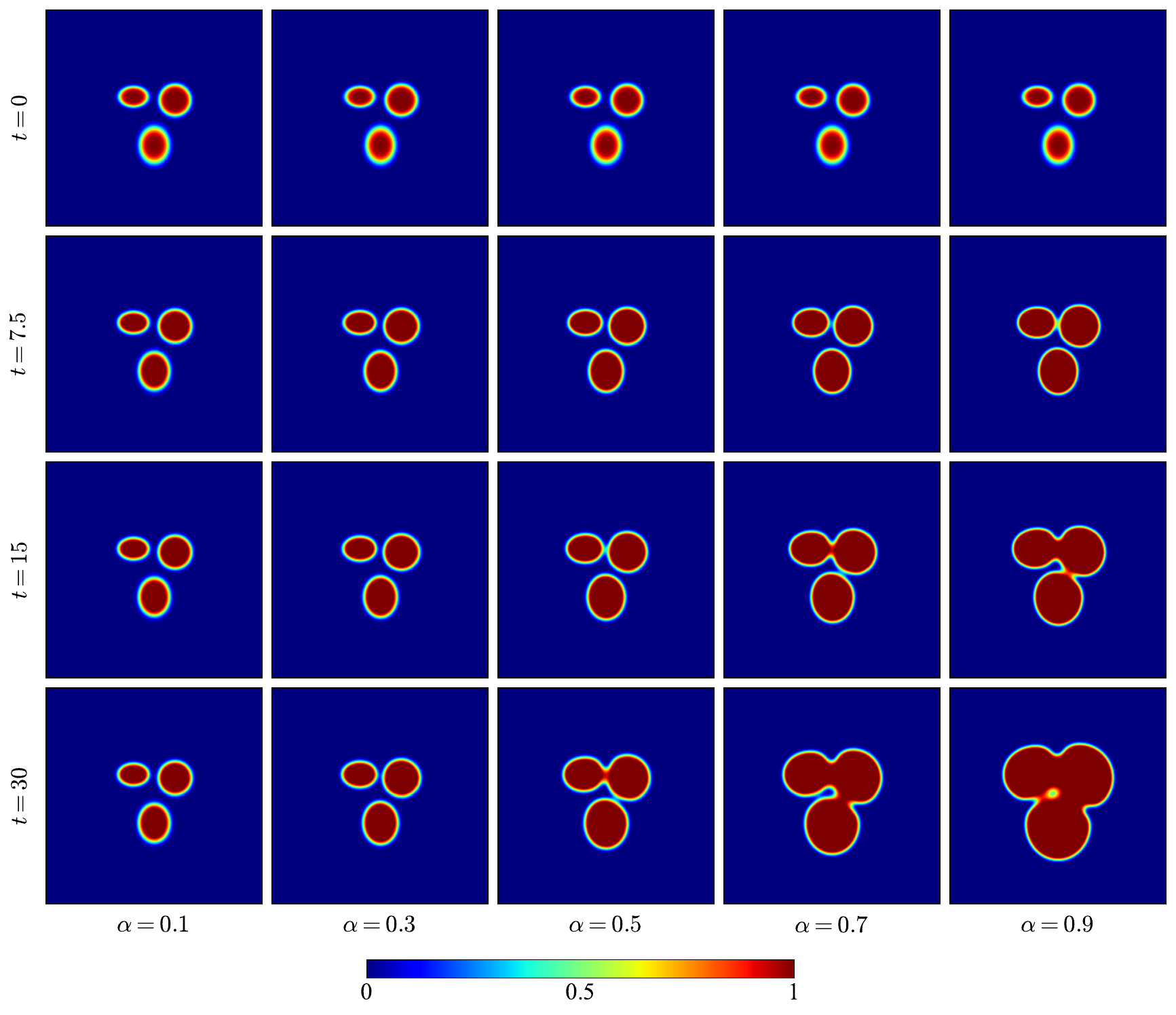}
\caption{Snapshots of the tumor phase $\phi$ for different fractional orders.}\label{fig:example2-phase}
\end{figure}

Figure~\ref{fig:example2-phase} shows the expansion and coalescence of three initially separated tumor regions.
Over the simulated time interval, larger fractional orders lead to more pronounced expansion and earlier coalescence.
At $t=30$, the three regions remain separate for $\alpha=0.1,0.3$, whereas the upper pair has merged for $\alpha=0.5$.
For $\alpha=0.7,0.9$, the upper pair has already merged by $t=15$, and all three regions form a connected tumor region by $t=30$.
The merged tumor region retains a lobed morphology reflecting the initial spatial arrangement.

\begin{figure}[!htbp]
\centering
\begin{subfigure}[t]{0.32\textwidth}
\centering
\includegraphics[width=\linewidth]{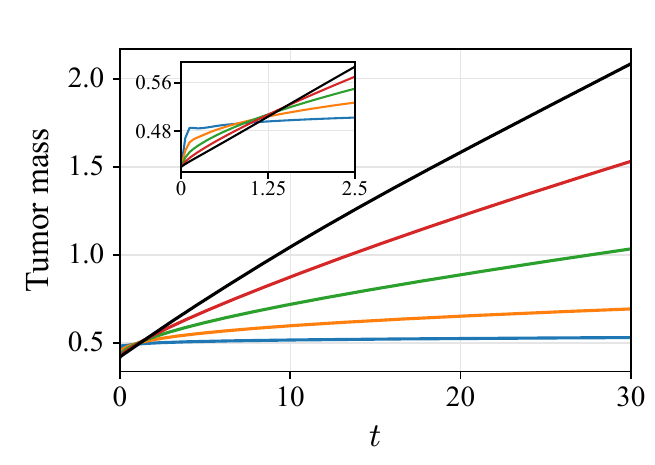}
\caption{}\label{fig:example2-tumor-mass}
\end{subfigure}
\hfill
\begin{subfigure}[t]{0.32\textwidth}
\centering
\includegraphics[width=\linewidth]{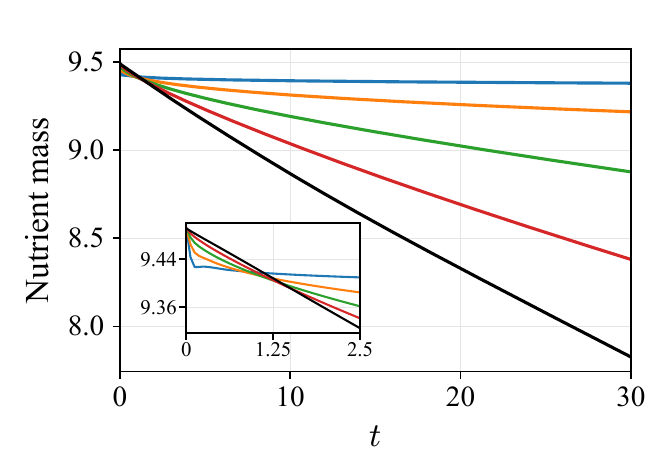}
\caption{}\label{fig:example2-nutrient-mass}
\end{subfigure}
\hfill
\begin{subfigure}[t]{0.32\textwidth}
\centering
\includegraphics[width=\linewidth]{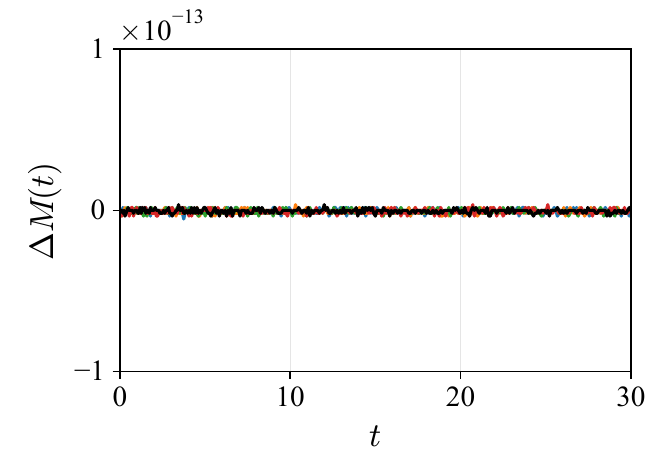}
\caption{}\label{fig:example2-total-mass}
\end{subfigure}
\par\medskip
\begin{subfigure}[t]{0.32\textwidth}
\centering
\includegraphics[width=\linewidth]{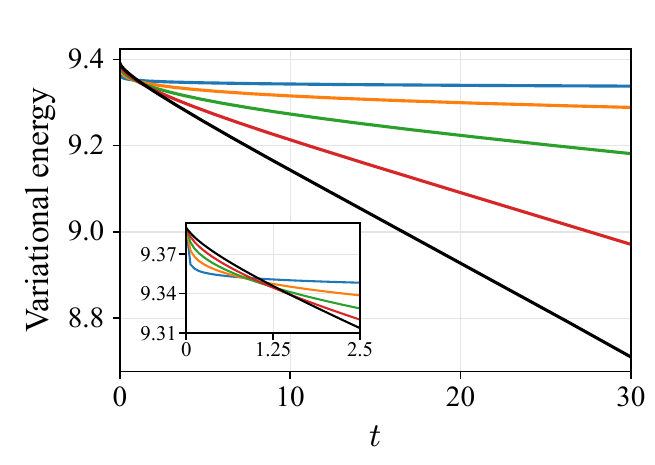}
\caption{}\label{fig:example2-energy-decay}
\end{subfigure}
\hfill
\begin{subfigure}[t]{0.32\textwidth}
\centering
\includegraphics[width=\linewidth]{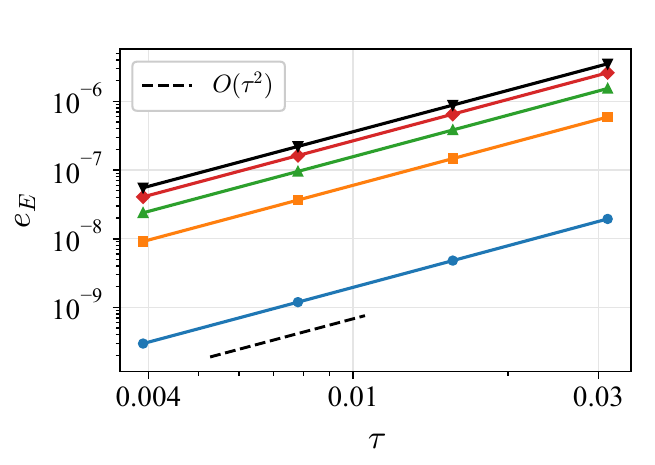}
\caption{}\label{fig:example2-energy-compatibility}
\end{subfigure}
\hfill
\begin{subfigure}[t]{0.32\textwidth}
\centering
\includegraphics[width=\linewidth]{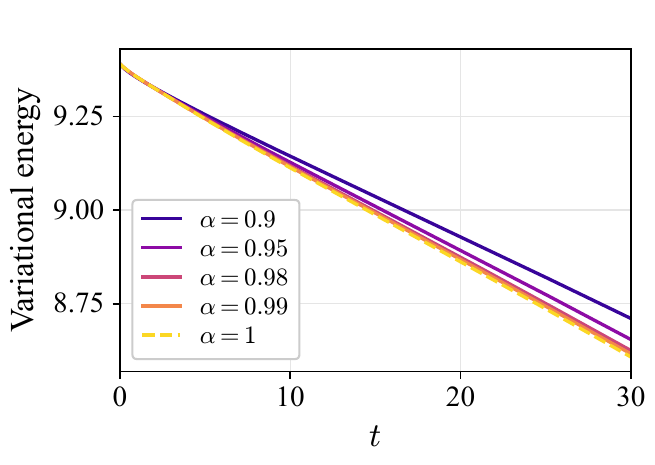}
\caption{}\label{fig:example2-integer-limit}
\end{subfigure}
\par\smallskip
\includegraphics[width=0.80\textwidth]{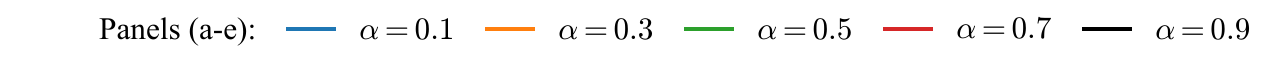}
\caption{Energy and mass evolution:
(a) tumor mass;
(b) nutrient mass;
(c) total-mass variation;
(d) variational energy;
(e) absolute difference between the discrete relaxation and original energies;
(f) variational energy as $\alpha\to1^-$.}
\label{fig:example2-diagnostics}
\end{figure}

Figure~\ref{fig:example2-diagnostics} reports the mass and energy evolution for Example~\ref{example2}.
Subfigures~\ref{fig:example2-diagnostics}(\subref{fig:example2-tumor-mass})--\ref{fig:example2-diagnostics}(\subref{fig:example2-total-mass}) show increasing tumor mass and decreasing nutrient mass,
with total mass conserved throughout.
Smaller $\alpha$ produces a faster initial increase in tumor mass followed by a pronounced slowdown,
whereas larger $\alpha$ yields slower initial growth but more sustained growth at later times,
resulting in greater final tumor mass.
Regarding energy dissipation,
the discrete variational energy decreases monotonically for all tested orders:
smaller $\alpha$ produces a rapid initial decrease,
after which the energy levels off,
whereas larger $\alpha$ sustains the decrease longer and yields a lower final energy;
see subfigure~\ref{fig:example2-diagnostics}(\subref{fig:example2-energy-decay}).
Subfigure~\ref{fig:example2-diagnostics}(\subref{fig:example2-energy-compatibility}) plots the energy difference $e_E(N):=|\widetilde E_{dm}^{N}-E^N|$ at $T=1$ on uniform grids.
For all five orders,
this difference is larger for larger $\alpha$ at fixed $\tau$ and decays as $O(\tau^2)$ under refinement,
supporting compatibility of the two discrete energies.
Finally, subfigure~\ref{fig:example2-diagnostics}(\subref{fig:example2-integer-limit}) compares the variational energy for $\alpha=0.9,0.95,0.98,0.99$ with the integer-order reference on the same time grid.
The differences are more noticeable at later times and diminish as $\alpha\to1^-$,
consistent with Remark~\ref{rem:compatibility_DMLR}.

\begin{example}[Comparison with experimental tumor spheroid growth]\label{example:u87}
We simulate in vitro U87 tumor spheroid growth and compare the results with the microscopy images for 5000 initially seeded cells in Figure 4 of~\cite{bach2024rapid}, with $1\,\mathrm{mm}$ and $1\,\mathrm{day}$ as the length and time scales, respectively.
We retain the phase evolution equation and use the quasi-static nutrient equation under spherical symmetry, with $\ell$ denoting the radial coordinate, on $0<\ell<0.65$, with $\sigma=1$ and $\partial_\ell\phi=\partial_\ell\mu_\phi=0$ at $\ell=0.65$, and $\partial_\ell\phi=\partial_\ell\mu_\phi=\partial_\ell\sigma=0$ at $\ell=0$.
The initial phase has the form
\[
\phi_0(\ell)=\frac12\left[1-\tanh\left(\frac{\ell-\ell_0}{2\sqrt2\,\epsilon_\phi}\right)\right],
\]
where $\ell_0$ and the initial day are specified separately for the microscopy and volume comparisons below.
We set $\epsilon_\phi=0.003$, $\delta=10^{-4}$, $\chi_0=0.05$, $\kappa_\phi=1$, $\lambda=0.95$, $M_\phi=0.1$, $M_\sigma=2\times10^{-6}$, and $P(\phi)=P_0\delta\phi^2$,
giving $D_\sigma=M_\sigma/\delta=0.02$.
We use a finite-volume discretization on 640 uniform spherical shells.
The time grid is $t_n=T(n/N)^\gamma$, where $\gamma=1$ for $\alpha=1$ and $\gamma=1.2$ for $\alpha=0.8$.
The computed radius is defined by the outer $\phi=0.5$ interface.
\end{example}

\begin{figure}[htbp]
\centering
\includegraphics[width=0.8\textwidth]{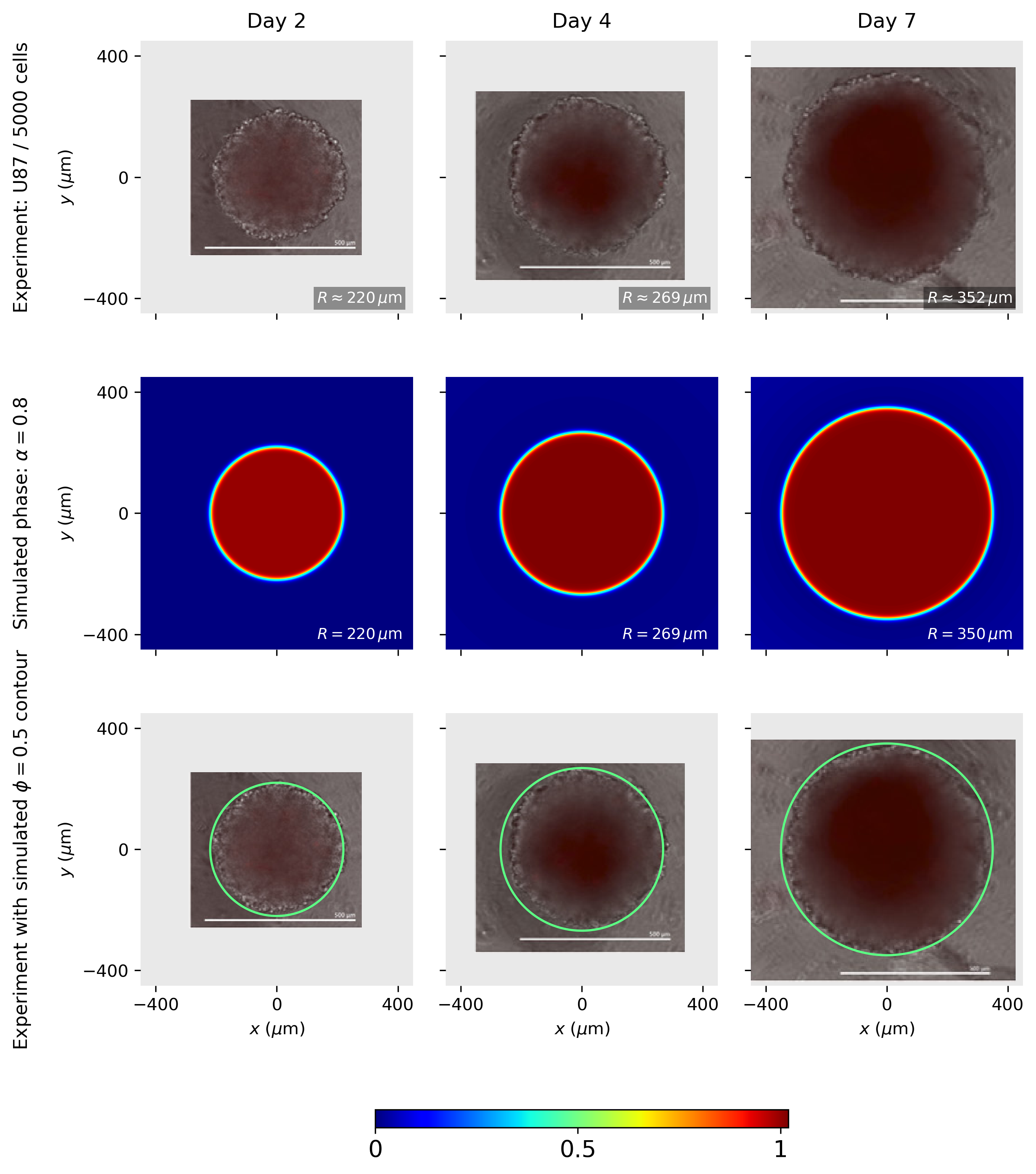}
\caption{U87 spheroid growth at days 2, 4, and 7:
microscopy images adapted from Figure 4 of Bach et al.~\cite{bach2024rapid} (top),
computed phase fields for $\alpha=0.8$ (middle),
and microscopy images overlaid with the corresponding $\phi=0.5$ contours (bottom).}
\label{fig:example3-phase-comparison}
\end{figure}

Figure~\ref{fig:example3-phase-comparison} compares the simulated spheroid growth for $\alpha=0.8$ with the microscopy images,
with the computed $\phi=0.5$ contours superimposed on the images for direct comparison.
The simulation starts at day 2 with $\ell_0=0.220$ estimated from the corresponding image,
and uses $T=5$ with $N=9600$ time steps.
We calibrate $P_0=0.66609$ to the approximate day-4 radius of $268.6\,\mu\mathrm m$.
At day 7,
the computed radius is approximately $349.7\,\mu\mathrm m$,
compared with the image-based estimate of $352.3\,\mu\mathrm m$,
giving an absolute relative error of $0.7\%$.

\begin{figure}[htbp]
\centering
\includegraphics[width=0.9\textwidth]{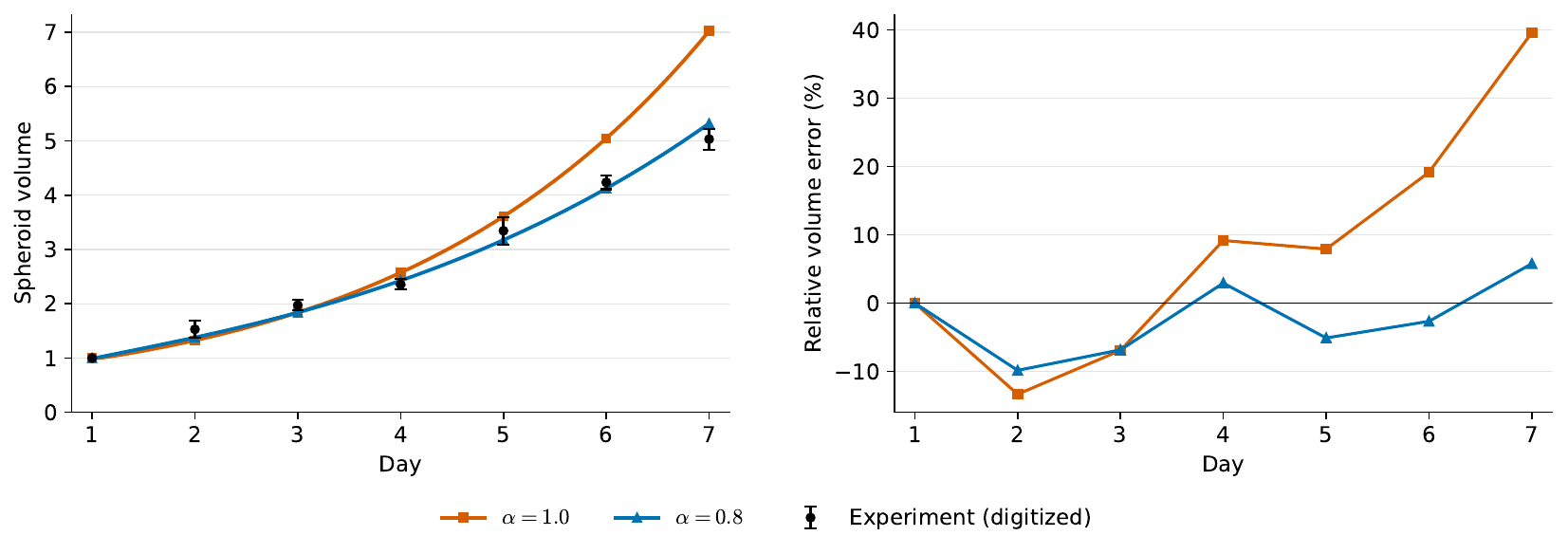}
\caption{Comparison with the 5000-cell spheroid volume data in Figure 2 of~\cite{bach2024rapid}, whose means and standard deviations are approximately digitized from the reported curves and error bars:
spheroid volume (left) and relative volume errors (right) for $\alpha=1$ and $0.8$.}
\label{fig:example3-volume-comparison}
\end{figure}

For Figure~\ref{fig:example3-volume-comparison},
we initialize the simulation and fractional memory at day 1 using $V_0\approx0.034 \,\mathrm{mm}^3$ estimated from Figure 1 of~\cite{bach2024rapid},
with $ \ell_0=[3V_0/(4\pi)]^{1/3}/(1\,\mathrm{mm})$,
retaining the calibrated parameters and using $T=6$ with $N=11520$.
Starting from the same day-1 volume,
the $\alpha=0.8$ simulation remains closer to the experimental mean volume than the integer-order simulation at every subsequent observation time through day 7.
This improvement is also reflected in the mean absolute percentage error over days 2--7,
which decreases from $16.0\%$ for $\alpha=1$ to $5.5\%$ for $\alpha=0.8$.

\begin{example}[Tumor growth with sustained nutrient supply]\label{example4}
This example is motivated by the simulations of unconfined tumor growth with a quasi-static nutrient field reported in~\cite{zieger2026phase}. 
We consider the domain $\Omega=[-1,1]^2$ and impose the boundary conditions
$\sigma = 1$ and $\partial_{\bm n}\phi = \partial_{\bm n}\mu_\phi = 0$ on $\partial\Omega$.
The parameters are set to $\epsilon_\phi=0.003$,
$\chi_0=0.05$,
$\delta=10^{-4}$,
$P_0=0.01$,
$M_\phi=0.3$,
and $M_\sigma=10^{-8}$,
with $P(\phi)=P_0\delta\phi^2$.
The initial tumor phase is defined by
\[
\phi_0(x,y) =\frac12\left[1-\tanh\left(\frac{\ell-\ell_0(\vartheta)}{2\sqrt2\,\epsilon_\phi}\right)\right],\quad
\ell_0(\vartheta) =0.55[1-0.005\cos(2\vartheta)-0.03\cos(4\vartheta)],
\]
where $\ell=\sqrt{x^2+y^2}$ and $\vartheta=\operatorname{atan2}(x,y)$.
The initial nutrient field is obtained from the quasi-static nutrient equation with $\phi=\phi_0$.
We use a $768\times768$ cell-centered finite-volume grid and 600 uniform time steps,
with $\tau_*=0.1$, $\kappa_\phi=1$, and $\lambda=0.95$.
\end{example}

\begin{figure}[!htb]
\centering
\includegraphics[width=0.79\textwidth]{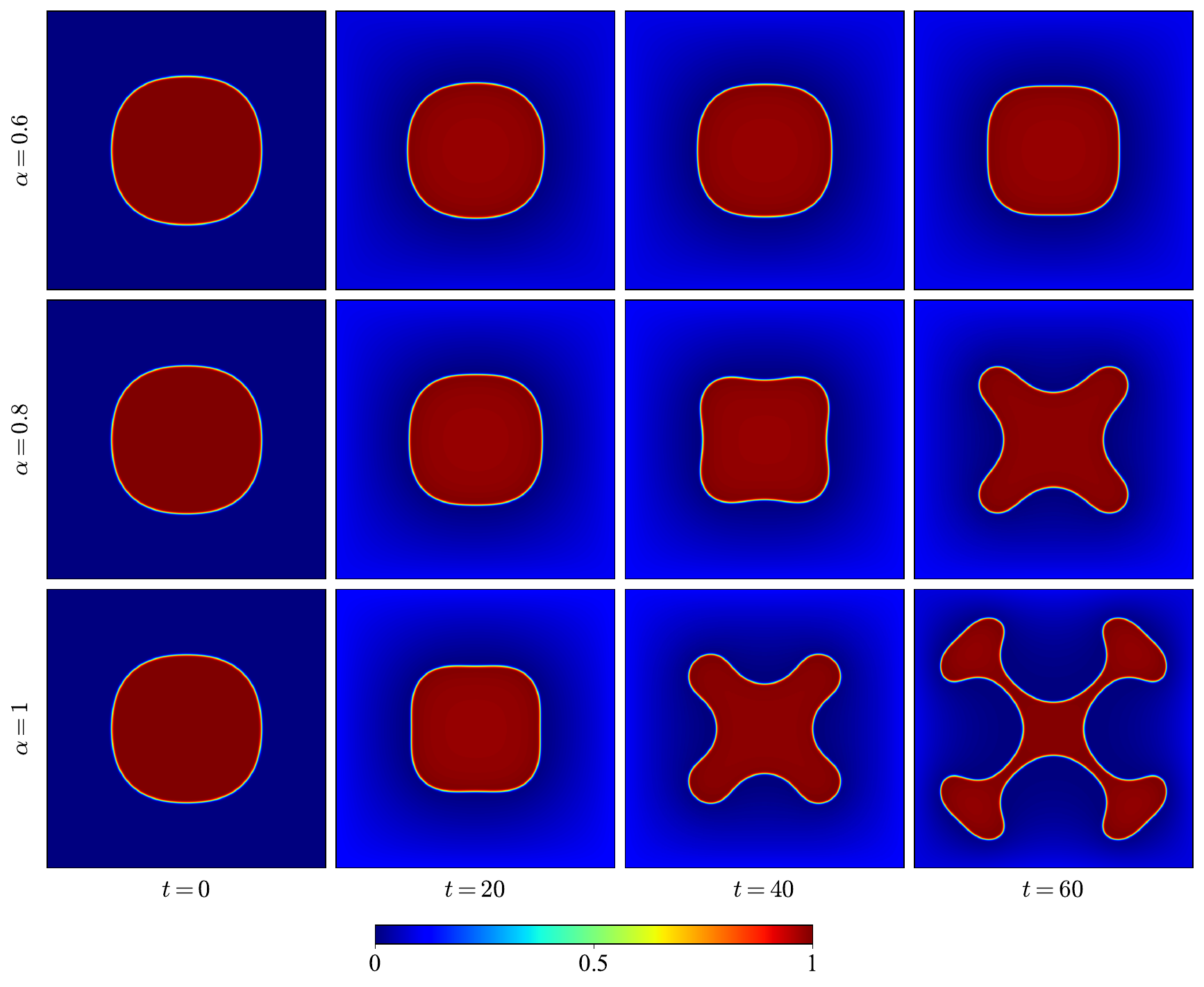}
\caption{Tumor phase evolution under sustained nutrient supply for $\alpha=0.6,0.8,1$.}\label{fig:example4-phase}
\end{figure}

Figure~\ref{fig:example4-phase} illustrates the effect of $\alpha$ on tumor evolution under sustained nutrient supply.
For $\alpha=1$,
the rounded initial shape develops into a four-armed structure by $t=40$,
with arms growing from the corners along the diagonals;
by $t=60$,
the arms have elongated further and broadened at their tips.
For $\alpha=0.8$,
the same deformation develops more slowly,
and the shape at $t=60$ resembles that for $\alpha=1$ at $t=40$,
whereas for $\alpha=0.6$ no arms form and the outline only flattens into a rounded square.
These results indicate that smaller values of $\alpha$ delay the amplification of interfacial perturbations and the development of arm-like morphologies over the simulated interval.

\section{Conclusions}\label{sec:conclusion}
This work established a variational energy framework for a mass-conservative subdiffusive tumor--nutrient model incorporating chemotactic coupling and memory-dependent reaction exchange.
The weighted Riemann--Liouville identities provided the tools to construct variational energies and establish the corresponding dissipation laws for the continuous model and its linear relaxation approximations.
For numerical approximation, we developed linear relaxation, damped linear relaxation, and damped modified linear relaxation schemes.
Temporal damping limits the influence of alternating auxiliary-variable errors on the state variables but leaves the underlying recurrence unchanged.
The a posteriori relaxation correction further suppresses the alternating auxiliary-variable error mode while preserving the discrete dissipation law and the linear structure of the scheme.
All three schemes conserve total mass and satisfy discrete variational energy dissipation laws.
Both the schemes and these laws are asymptotically compatible with their integer-order counterparts.
Numerical simulations show that larger fractional orders lead to more sustained tumor growth and more pronounced morphological evolution over the simulated intervals, while an appropriate fractional order improves agreement with experimental spheroid growth data compared with the integer-order model.

The present analysis establishes structural properties of the schemes, while their convergence behavior is assessed numerically.
A rigorous error analysis accounting for initial temporal singularities, nonuniform time steps, and the alternating auxiliary-variable error remains an important direction for future work.

\bibliographystyle{siamplain}
\bibliography{FractionalTumor}
\end{document}